\DeclareSymbolFont{AMSb}{U}{msb}{m}{n}
\DeclareMathSymbol{\N}{\mathbin}{AMSb}{"4E}
\DeclareMathSymbol{\Z}{\mathbin}{AMSb}{"5A}
\DeclareMathSymbol{\Q}{\mathbin}{AMSb}{"51}
\DeclareMathSymbol{\I}{\mathbin}{AMSb}{"49}
\DeclareMathSymbol{\F}{\mathbin}{AMSb}{"46}
\newcommand{\ZC}{\hat{\mathfrak{Z}}_{crit}}
\newcommand{\ZZ}{\mathfrak{Z}_{crit}}
\newcommand{\ZL}{\mathfrak{Z}_{crit,\lambda}}
\newcommand{\ZU}{\mathfrak{Z}_{crit}^{unr}}
\newcommand{\VACO}{\mathbbm{V}^{0}}
\newcommand{\VAC}{\mathbbm{V}^{\lambda}}
\newcommand{\zl}{\mathfrak{z}_{crit,\lambda}}
\newcommand{\zz}{\mathfrak{z}_{crit}}
\newcommand{\OO}{\Omega(\ZZ)}
\newcommand{\AREN}{\mathcal{A}^{ren,\tau}_{crit}}
\newcommand{\ARENL}{\mathcal{A}^{ren,\tau}_{crit,\lambda}}
\newcommand{\AFT}{\mathcal{A}^{\flat,\tau}_{crit}}
\newcommand{\AFTL}{\mathcal{A}^{\flat,\tau}_{crit,\lambda}}
\newcommand{\AF}{\mathcal{A}^{\flat}_{crit}}
\newcommand{\AFL}{\mathcal{A}^{\flat}_{crit,\lambda}}
\newcommand{\hgc}{\widehat{\check{\mathfrak{g}}}_{crit}}
\newcommand{\gc}{\hat{\mathfrak{g}}_{crit}}
\newcommand{\DD}{\mathcal{D}_{crit}^{0}}
\newcommand{\AC}{\mathcal{A}_{crit}}
\newcommand{\AH}{\mathcal{A}_{\hbar}}
\newcommand{\ACL}{\mathcal{A}_{crit,\lambda}}
\newcommand{\PDDm}{(\Psi\boxtimes \Psi)(\DD)\text{-mod}}
\newcommand{\OP}{\text{Op}_{\check{\g},X}}
\newcommand{\OPL}{\text{Op}_{\check{\g},X}^{\lambda}}
\newcommand{\OPPD}{\text{Op}_{\check{\g}}(D^{\times}_{x})}
\newcommand{\OPD}{\text{Op}_{{\check{\g}}}(D_x)}
\newcommand{\OPDL}{\text{Op}_{\check{\g}}^{\lambda,reg}}
\newcommand{\OPDM}{\text{Op}_{\check{\g}}^{\mu,reg}}
\newcommand{\OPDU}{\text{Op}_{\check{\g}}^{unr}}
\newtheorem{thm}{Theorem}[subsection]
\newtheorem{lemma}[thm]{Lemma}
\newtheorem{prop}[thm]{Proposition}
\theoremstyle{definition}
\newtheorem{remark}[thm]{Remark}
\numberwithin{equation}{subsection}
\newcommand{\DG}{\mathcal{D}_{crit}}
\newcommand{\DGM}{\DG\text{-mod}^{G[[t]]}}
\newcommand{\GR}{\text{Gr}_{G}}
\newcommand{\GRK}{G(\!(t)\!)/K}
\newcommand{\g}{\mathfrak{g}}
\newcommand{\DX}{\mathcal{D}_{X}}
\newcommand{\DM}{D_{crit}\text{-mod}(\GR)}
\newcommand{\DMH}{D_{crit}^{Hecke}\text{-mod}(\GR)}
\newcommand{\gmr}{\gc\text{-mod}_{reg}}
\newcommand{\gm}{\gc\text{-mod}}
\newcommand{\il}{i^!_{\lambda}}
\newcommand{\ild}{i_{!,\lambda}}
\newcommand{\M}{\mathcal{M}}
\newcommand{\NN}{\mathcal{N}}
\DeclareMathOperator{\Hom}{Hom}
\newcommand{\B}{\mathcal{B}}
\newcommand{\A}{\mathcal{A}}
\newcommand{\mF}{\mathcal{F}}
\newcommand{\mM}{\mathcal{M}}
\newcommand{\mO}{\mathcal{O}}
\newcommand{\mL}{\mathcal{L}}
\newcommand{\mN}{\mathcal{N}}
\newcommand{\is}{\simeq}
\newcommand{\ra}{\rightarrow}
\newcommand{\fg}{\mathfrak{g}}
\newcommand{\sX}{\mathscr{X}}
\newcommand{\sY}{\mathscr{Y}}
\newcommand{\quash}[1]{} 
\newcommand{\bC}{{\mathbb C}}
\newcommand{\UG}{U'_{crit}}
\title[Twisted global section functor for $D$-modules on affine Grassmannian]{Twisted global section functor for $D$-modules on affine Grassmannian}
        \author{Tsao-Hsien Chen, Giorgia Fortuna}
        \date{\today}
        \address{}
        \email{tsaohsien@gmail.com}
        \address{}
         \email{fortuna@math.mit.edu}
\begin{document}
\maketitle
\begin{abstract}
For each integral dominant weight $\lambda$, we construct a twisted global section functor $\Gamma^{\lambda}$ from 
the category of critical 
twisted 
$D$-modules on affine Grassmannian to the category of $\lambda$-regular modules of 
affine Lie algebra at critical level. 
We proved that $\Gamma^{\lambda}$ is exact and faithful. This generalized the 
work of Frenkel and Gaitsgory \cite{FG} in the case when $\lambda=0$.
\end{abstract}
\setcounter{tocdepth}{1}
 \tableofcontents

\section*{Introduction}
\subsection{} Let $\g$ be a simple Lie algebra over the complex numbers, and $G$ be the corresponding algebraic group of adjoint type.  Let $\kappa$ be an invariant non-degenerate bilinear form $$\kappa:\g\otimes \g\rightarrow \mathbbm{C}.$$
Let $\hat{\g}_{\kappa}$ be the affine Kac-Moody algebra given as the central extension of the loop algebra $\g(\!(t)\!)$
\begin{equation}\label{gc}
0\rightarrow \mathbbm{C}\mathbbm{1}\rightarrow \hat{\g}_{\kappa}\rightarrow \g(\!(t)\!)\rightarrow 0,
\end{equation}
with bracket given by
\[
 [af(t),bg(t)]=[a,b]f(t)g(t)+\kappa(a,b)\text{Res}(fdg)\cdot\mathbbm{1},
\]
\noindent where $a$ and $b$ are elements in $\g$, and $\mathbbm{1}$ is the central element.\\
 Denote by $\gc$ the
affine  Kac-Moody algebra corresponding to $\kappa=-1/2\kappa_{kill}=:\kappa_{crit}$, where $\kappa_{kill}$ denotes the Killing form, and let $\ZC$ be the center of 
appropriately completed twisted enveloping algebra $\UG$ of $\gc$. We are mostly interested in the category $\gm$ of continuous $\UG$-modules. These are the same as discrete $\gc$-modules on which the central element $\mathbbm{1}$ acts as the identity. 
\subsection{} Let $\GR=G(\!(t)\!)/G[[t]]$ be the affine Grassmannian of $G$. Denote by $\DM$ the category of critical-twisted $D$-modules on $\GR$ as introduced in \cite{BD2}. We have the functor of global sections
$$\Gamma: \DM\rightarrow \gc\text{-mod},\,\,\,\,\;\mathcal{F}\mapsto \Gamma(\GR,\mathcal{F}). $$
In \cite{FG}, it is shown that the above functor $\Gamma$ is exact and faithful. This functor plays an important role in the study of the \emph{Geometric Langlands}. More precisely, 
Let $V^{\lambda}$ be the irreducible $\fg$-module with highest weight $\lambda$
and set $\mathbb V^{\lambda}=\text{Ind}_{\fg[[t]]}^{\gc}(V^{\mu})$. Denote by $\zl$ the algebra $\zl:=\text{End}(\mathbb V^{\mu})$. It can be shown that $\zl$ is in fact commutative, and the map $\ZC\rightarrow \zl$ is a surjection
\[
\ZC\twoheadrightarrow \zl.
\]
Denote by $\gm_{reg,\lambda}$ the subcategory of $\gm$ consisting of modules such that the action of the center 
$\ZC$ factors through $\zl$. For $\lambda=0$ we will simply denote $\mathfrak{z}_{crit,0}$ and $\gm_{reg,0}$ by $\zz$ and $\gmr$ respectively.\\
\indent It can be shown that the functor $\Gamma: \DM\rightarrow \gm$, in fact, lands in the subcategory $\gmr$. Moreover, as it is shown in \cite{FG5}, the exactness and faithfulness of the above functor, allows to show the equivalence of categories 
\begin{equation}\label{Iwa}
\DMH^{I_{0}}\xrightarrow{\sim}\gmr^{I_{0}}.
\end{equation}
In the above expression, $\DMH$ denotes the \emph{Hecke category} 
\[
\DMH:=\DM\underset{\text{pt}/\check{G}}{\times} \text{Spec}(\zz),
\]
and $\gmr^{I_{0}}$ (resp. $\DMH^{I_0}$) denotes the subcategory of $\gmr$ (resp. of $\DMH$) consisting of modules which are $I_0$-integrable (resp. $I_0$-equivariant ), where $I_{0}$ denotes the unipotent radical of the Iwahori subgroup $I\subset G[[t]]$. 

 \subsection{}
 The main purpose of this paper is to construct a different functor $\Gamma^{\lambda}$
 \[
 \Gamma^{\lambda}: \DM\rightarrow \gm_{reg,\tau(\lambda)},
 \]
where $\tau$ is the involution of the Dynkin diagram that sends a weight $\lambda$ to $-w_0(\lambda)$, for $w_0$ longest element in the Weyl group. We will show that $\Gamma^{\lambda}$, for $\lambda$ dominant, is exact and faithful. Following \cite{FG5}, this will be the point of departure for a following-up paper where an equivalence similar to \ref{Iwa} will be shown. More precisely, the functor $\Gamma^{\lambda}$ can be used to construct a different functor $\Gamma^{\lambda,Hecke}$ yielding an equivalence
\[
\DMH^{I_{0}}\xrightarrow{\sim}\gm_{reg,\tau(\lambda)}^{I_{0}}.
\] 

\subsection{} In order to explain how the functor $\Gamma^{\lambda}$ arises, it is important to recall
the construction of the functor of global sections given in \cite{FG}. For this, recall the \emph{chiral algebra $\DG$ of critically twisted chiral differential operators on the loop group} $G(\!(t)\!)$, introduced in \cite{AG}.  As it is shown in {\it loc. cit.}, it admits two embeddings
\[
\AC\xrightarrow{l}\DG\xleftarrow{r}\AC,
\]
where $\AC$ is the chiral algebra 
attached to the Lie$^*$-algebra $L=\g\otimes \DX\oplus \Omega_{X}$ at the critical level as explained in \cite{BD}.
If we restrict these two embeddings to the center $\ZZ$ of $\AC$, as it is explained in \cite{FG} Theorem 5.4, we have
\[
 l(\ZZ)=l(\AC)\cap r(\AC)=r(\ZZ).
\]
Moreover the two compositions
\begin{equation}\label{centre}
 \ZZ\hookrightarrow \AC\xrightarrow{l}\DG\xleftarrow{r}\AC\hookleftarrow \ZZ
\end{equation}
are intertwined by the automorphism $\tau: \ZZ\rightarrow \ZZ$ mentioned before.\\
One would expect to have some sort of functor between $D$-modules on the affine Grassmannian and $D$-modules on the loop group $G(\!(t)\!)$ that are $G[[t]]$-equivariant. However, the difficulties in defined the category of $D$-modules on the loop group, make the existence of such functor vague. However, as it is explained in \cite{AG},  it is natural to relate $\DM$ with the category $\DGM$ of $G[[t]]$-equivariant $\DG$-modules supported at $x\in X$. In fact the following is true.
\begin{thm}\label{t1}
 There exist a canonical equivalence of categories
\[
 \DM\simeq \DGM.
\]
\end{thm}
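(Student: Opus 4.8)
The plan is to realize both sides as modules over one and the same topological associative algebra, and then to pass between them by equivariant descent. First I would invoke the general correspondence between chiral modules and modules over an associative algebra: for any chiral algebra $\mathcal{A}$ on $X$, the category of chiral $\mathcal{A}$-modules supported at a fixed point $x\in X$ is equivalent to the category of discrete (smooth) modules over the topological associative algebra $U_x(\mathcal{A})$ obtained by localizing $\mathcal{A}$ at $x$. Applying this to $\mathcal{A}=\DG$ reduces the right-hand side $\DGM$ to the category of $G[[t]]$-equivariant discrete modules over $U_x(\DG)$.

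Next I would identify $U_x(\DG)$ with a suitable completion of the algebra of critically twisted differential operators on the loop group $G(\!(t)\!)$. This is built into the construction of $\DG$ in \cite{AG}: by definition $\DG$ is the chiral algebra of critically twisted chiral differential operators on $G(\!(t)\!)$, so its associative algebra at a point is precisely the completed ring of twisted differential operators, with the two embeddings $l,r$ of $\AC$ recording the left and right infinitesimal translation actions.

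The final and most delicate step is descent along the $G[[t]]$-torsor $G(\!(t)\!)\to \GR$. Here the $G[[t]]$-equivariance condition on $U_x(\DG)$-modules (with respect to, say, the right translation action recorded by $r$) is converted into the statement that the module descends to a critically twisted $D$-module on the quotient $\GR$, the critical twist on $\GR$ being exactly the one induced by the twist defining $\DG$. I expect this step to be the main obstacle, for two reasons. First, the category of $D$-modules on the ind-scheme $G(\!(t)\!)$ is not well behaved, as already noted, so the argument must be phrased entirely at the level of the topological algebra and its equivariant modules rather than through any naive category of $D$-modules on the loop group. Second, since $G[[t]]$ is a pro-algebraic group of infinite type, the equivalence between equivariant modules upstairs and modules on the quotient must be set up in the appropriate Harish--Chandra / pro-algebraic framework; one must then verify that the relevant completions and topologies are compatible with descent and that the critical twist matches on the nose.
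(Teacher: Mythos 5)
The paper itself contains no proof of Theorem \ref{t1}: the statement is imported from \cite{AG}, and the only place the equivalence is made explicit is in Section~\ref{6}, where (quoting \cite[Sect.\ 1.12]{FG3}) the functor is written concretely as $\mF\mapsto M_{\mF}=\mF\star\mathcal{D}_{crit,x}$, with $\mathcal{D}_{crit,x}$ the fiber of $\DG$ at $x$ viewed as a $(\gc,G[[t]])$-bimodule. So the comparison is really between your sketch and the argument of \cite{AG}, and there your proposal has a genuine gap.

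Your framing --- chiral $\DG$-modules supported at $x$ are discrete modules over the topological associative algebra attached to $\DG$ at $x$, and the equivalence should be a descent along $G(\!(t)\!)\to\GR$ --- is the right one. But the middle step as you state it is circular: you propose to ``identify $U_x(\DG)$ with a suitable completion of the algebra of critically twisted differential operators on the loop group,'' calling this built into the construction. There is no prior definition of that algebra; the absence of a sensible ring of differential operators on the ind-infinite-type ind-scheme $G(\!(t)\!)$ is exactly why \cite{AG} construct $\DG$ as a chiral algebra in the first place, so this identification cannot serve as an input. What actually drives the equivalence in \cite{AG} is the realization of the fiber $\mathcal{D}_{crit,x}$ as a $(\gc,G[[t]])$-bimodule obtained from global sections over $\GR$ of the (ind-)sheaf of critically twisted differential operators; the functor $\DM\to\DGM$ is then the concrete convolution $\mF\mapsto\mF\star\mathcal{D}_{crit,x}$, and the quasi-inverse reconstructs a twisted $D$-module on $\GR$ from the $G[[t]]$-equivariant structure with respect to the second $\gc$-action. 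You explicitly defer precisely this step --- the descent, the construction of the inverse, and the verification that unit and counit are isomorphisms in the Harish--Chandra/pro-algebraic setting --- calling it ``the main obstacle.'' Since that step is the entire mathematical content of the theorem, the proposal as written is an outline of where a proof would live rather than a proof.
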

\noindent 

Under the above equivalence, the functor $\Gamma$ is given by the composition
\[
 \DM\simeq \DGM\xrightarrow{For} (\gc\times \gc)\text{-mod}^{G[[t]]}\xrightarrow{\Hom(\mathbbm{V}^{0},\,\,)}\gm,
\]
where the forgetful functor to $(\gc\times \gc)\text{-mod}^{G[[t]]}$ is given by the embeddings $l$ and $r$ and by the equivalence between $\AC$-modules supported at $x$ and $\gm$.
In other words, given a $D_{crit}(\GR)$-module $\mathcal{F}$, if we denote by $M_{\mathcal{F}}$ the corresponding $\DG$-module, then 
$\Gamma(\GR,\mathcal{F})$ is given by $(M_{\mathcal{F}})^{\g[[t]]}=\Hom(\mathbbm{V}^{0},M_{\mathcal{F}})$.

\subsection{Statement of the Main Theorem}
The above construction suggests a way of defining a different functor $\Gamma^{\lambda}$ from $\DM$ to the category $\gm_{reg,\tau(\lambda)}$ introduced earlier. 
In fact we can define $\Gamma^{\lambda}$ as the composition
\begin{equation}\label{eq1}
  \DM\simeq \DGM\xrightarrow{For} (\gc\times \gc)\text{-mod}^{G[[t]]}\xrightarrow{\Hom(\VAC,\,\,)}\gm_{reg,\tau(\lambda)}.
\end{equation}

The main theorem of this paper is that, for dominant weights $\lambda$'s, the functor $\Gamma^{\lambda}$ remains exact and faithful.
\begin{thm}\label{th1}
 For any dominant weight $\lambda$, the functor $$\Gamma^{\lambda}:D_{crit}(\GR)\text{-mod}\rightarrow \gc\text{-mod}_{reg,\tau(\lambda)}$$ is exact and faithful.
\end{thm}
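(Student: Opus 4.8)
The plan is to study $\Gamma^{\lambda}$ through the last arrow of \eqref{eq1}, namely the functor $M\mapsto \Hom_{\gc}(\VAC,M)$ applied to the left $\gc$-action on the $G[[t]]$-equivariant $(\gc\times\gc)$-module $M_{\mF}$ attached to $\mF$. Since $\VAC=\Ind_{\g[[t]]}^{\gc}(V^{\lambda})$ and $t\g[[t]]$ acts by zero on $V^{\lambda}$, Frobenius reciprocity gives
\[
\Hom_{\gc}(\VAC,M_{\mF})\;\is\;\Hom_{\g[[t]]}(V^{\lambda},M_{\mF})\;\is\;\Hom_{\g}\!\big(V^{\lambda},(M_{\mF})^{K}\big),
\]
where $K=\ker(G[[t]]\to G)$ is the first congruence subgroup and $(M_{\mF})^{K}$ carries the residual algebraic action of $G=G[[t]]/K$ together with the surviving right $\gc$-action. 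In particular $\Gamma^{\lambda}(\mF)$ is the $V^{\lambda}$-isotypic multiplicity space of $(M_{\mF})^{K}$, while $\Gamma^{0}(\mF)=(M_{\mF})^{\g[[t]]}=\Hom_{\g}(\bC,(M_{\mF})^{K})$ is its trivial-isotypic part: the two functors extract different $G$-isotypic pieces of one and the same object. That $\Gamma^{\lambda}(\mF)$ lands in $\gm_{reg,\tau(\lambda)}$ is forced by \eqref{centre}: requiring the left center to act through $\zl$ (as $\End(\VAC)=\zl$) constrains, via the intertwining automorphism $\tau$, the right center to act through $\zlt$.

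Exactness is then inherited from the $\lambda=0$ case. The left $K$-invariants functor $M\mapsto M^{K}$ is exact on strongly $G[[t]]$-equivariant modules -- this is precisely the input underlying the exactness of $\Gamma^{0}$ in \cite{FG}, the equivariance structure being what kills the higher cohomology of the pro-unipotent group $K$. Extracting the $V^{\lambda}$-isotypic multiplicity, i.e. applying $\Hom_{\g}(V^{\lambda},-)$, is exact because the category of algebraic representations of the reductive group $G$ is semisimple. Composing these two exact operations with the (exact) forgetful functor and the equivalence of Theorem \ref{t1} yields exactness of $\Gamma^{\lambda}$; equivalently, this is the assertion that the Weyl module $\VAC$ is projective in the category of strongly $G[[t]]$-equivariant critical modules.

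Faithfulness is the crux, and by the factorization above it amounts to showing that for every nonzero $\mF$ the representation $V^{\lambda}$ occurs as a $G$-constituent of $(M_{\mF})^{K}$. I would first settle the universal case $M_{\mF}=\DG$ (the image of the skyscraper $D$-module at the base point): here the relevant left invariants should be computed by the chiral analogue of the Peter--Weyl decomposition $\og\is\bigoplus_{\mu}V^{\mu}\boxtimes(V^{\mu})^{*}$, which realizes every dominant Weyl module with nonzero multiplicity, so that $\Hom(\VAC,\DG)\neq 0$ exactly because $\lambda$ is dominant. For general $\mF$ I would bootstrap from the faithfulness of $\Gamma^{0}$ proved in \cite{FG}: since $\Gamma^{0}(\mF)=(M_{\mF})^{G[[t]]}\neq 0$ for $\mF\neq 0$, and the residual $G$-action on $(M_{\mF})^{K}$ is governed by the bimodule (``functions on $G$'') structure of $\DG$, the nonvanishing of the trivial-isotypic part should propagate to nonvanishing of the $V^{\lambda}$-isotypic part for all dominant $\lambda$. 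I expect this last propagation step -- controlling which irreducible $G$-representations appear in $(M_{\mF})^{K}$ for an arbitrary strongly $G[[t]]$-equivariant $\DG$-module, and showing the dominant ones cannot all be avoided -- to be the main obstacle, with the dominance hypothesis on $\lambda$ entering exactly here.
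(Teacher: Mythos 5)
Your reduction via Frobenius reciprocity, $\Hom_{\gc}(\VAC,M)\is\Hom_{\g}(V^{\lambda},M^{K})$, is fine, but the exactness argument built on it has a genuine gap: the claim that $M\mapsto M^{K}$ is exact on the relevant category because ``the equivariance structure kills the higher cohomology of the pro-unipotent group $K$'' is false. Pro-unipotent groups have nonvanishing higher algebraic cohomology (already $H^{1}(\mathbb{G}_a,\bC)\neq 0$), and the superscript $G[[t]]$ in $(\gc\times\gc)\text{-mod}^{G[[t]]}$ is an \emph{integrability} condition, not a strong-equivariance/freeness structure that would make invariants exact by descent. Concretely, if your argument were correct it would show that $\Hom(\VAC,-)$ is exact on all of $\gm^{G[[t]]}$, which it is not: there are non-split self-extensions of $\VAC$ in $\gm^{G[[t]]}$ obtained by deforming the central character along $\text{Spec}(\ZU)$ (the normal directions $N_{\text{Spec}(\zl)/\text{Spec}(\ZU)}\is\Omega(\zl)$ of Proposition \ref{p8} are nonzero), and surjectivity of $\Hom(\VAC,E)\to\Hom(\VAC,\VAC)$ would split any such extension. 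This is exactly why the paper does \emph{not} argue this way: it factors $\Gamma^{\lambda}$ as $\Hom_{\gm_{reg,\lambda}}(\VAC,-)\circ\il\circ For$, uses the projectivity of $\VAC$ \emph{inside the subcategory} $\gm_{reg,\lambda}$ (a theorem from \cite{BD2}, not a consequence of semisimplicity of $\Rep(G)$), and then devotes Sections 2--5 to the real content, Theorem \ref{th2}: the functor $\il$ of taking the maximal submodule supported on $\text{Spec}(\zl)$ is exact on modules coming from $\DGM$. That step requires the modification $\ZL$ of the center, the chiral algebroids $\ARENL$ and $\AFTL$, the support statement Proposition \ref{p4}, and the chiral Kashiwara lemma giving the equivalence $\DDm_{unr,\lambda}\is\AFTL\text{-mod}$ (Proposition \ref{p2}). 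None of this machinery appears in your proposal, and dominance of $\lambda$ enters precisely there (through the geometry of $\OPDL\subset\OPDU\subset\OPPD$), not only in faithfulness as you suggest.

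On faithfulness, your instinct to settle the universal case via the Peter--Weyl-type decomposition of $\DG$ is consistent with the paper's computation $\Gamma^{\lambda}(\delta_e)=\mathbb{V}^{\tau(\lambda)}\neq 0$, but the ``propagation'' step you flag as the main obstacle is resolved in the paper by the convolution action of the spherical Hecke category together with geometric Satake: for $\mM\neq 0$ one finds $\mM'$ with $\delta_e\hookrightarrow\mM'\star\mM$, and then the already-established exactness of $\Gamma^{\lambda}$ plus the Hecke-equivariance of $\Gamma^{\lambda}$ (Lemma \ref{hc}) yields $0\neq\mathbb{V}^{\tau(\lambda)}\hookrightarrow\mM'\star\Gamma^{\lambda}(\mM)$. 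Note in particular that faithfulness consumes exactness as an input, so it cannot be run independently of the part of the argument that is missing.
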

The proof of Theorem \ref{th1} follows the line of \cite{FG} and can be divieded into two parts:
\begin{itemize}
\item Showing that for $\gc$-modules $M_{\mF}$ correspnding to $\mF\in\DM$ under Theorem \ref{t1}, 
the functor of taking maximal submodule of $M_{\mF}$ which is supported on $\text{Spec}(\zl)$ is exact.
\item The functor $\Hom(\VAC,\,\,)$ from $\gm_{reg,\lambda}$ to the category of vector spaces $\text{Vect}$ it exact. 
\end{itemize}

In fact, part  (2) follows from a result of \cite{FG} that says that $\VAC$ is projective in $\gm_{reg,\lambda}$.
Therefore, our job in the present note is to prove the claim in part (1).
This will be done using the construction of "modification at a point"
(Proposition \ref{p3}) and a chiral-version of Kashiwara lemma (Proposition \ref{kash 1}).
In the case when $\lambda=0$, which is done in \cite{FG}, we only need Kashiwara lemma to
complete the proof of part (1) and the main reason is that the chiral $\ZZ$-moodules
appearing in this case are central (see \ref{chiral} for the definition). It is no longer 
true for general $\lambda$ and the construction of ``modification at a point'' allows us
to deal with non-central chiral $\ZZ$-modules.

\subsection{}
The paper is constructed as follows. In Section \ref{2} we recall the main results about the center $\ZZ$, the space of opers on a curve $X$ and we 
introduce the construction of "modification at a point " for general chiral algebras. In section \ref{3}
we study Lie$^*$-algebroids and chiral algebroids arising from $\ZZ$. In section \ref{4} we reduce 
the exactness of the functor $\Gamma^{\lambda}$ to a chiral-version of Kashiwara lemma.
In Section \ref{5} we prove the Kashiwara lemma.
In section \ref{6} we prove the faithfulness of $\Gamma^{\lambda}$. 
\subsection{Conventions}

\noindent\label{chiral} Our basic tool in this paper is the theory of chiral algebras. We will assume the reader is familiar with
the foundational work \cite{BD} on this subject. However we will briefly recall some basic definitions and notations.\\
Throughout this paper $\Delta:X\hookrightarrow X\times X$ will denote the diagonal embedding and
 $j:U\rightarrow X\times X$ its complement, where $U=(X\times X)-\Delta(X)$.\\  For any two sheaves $\M$ and $\mathcal{N}$ 
denote by $\M\boxtimes \mathcal{N}$ the external tensor product 
$\pi_1^*\M\underset{\mathcal{O}_{X\times X}}{\otimes}\pi_2^*\mathcal{N}$, where $\pi_1$ and $\pi_2$ are the 
two projections from $X\times X$ to $X$.  For a right $\DX$-module $\M$ define the extension $\Delta_!(\M)$ as
\[
\Delta_!(\M)=j_*j^*(\Omega_X\boxtimes \M)/\Omega_X\boxtimes \M.
\]
Sections of $\Delta_!(\M)$ can be thought as distributions on $X\times X$ with support on the
 diagonal and with values on $\M$. If $\M$ and $\mathcal{N}$ are two right $\DX$-modules, we will denote by 
$\M\overset{!}{\otimes}\mathcal{N}$ the right $\DX$-module $\M\otimes \mathcal{N}\otimes \Omega_X^*$.\\
\newcommand{\R}{\mathcal{R}}
\indent A \emph{chiral algebra} over $X$ is a right $\DX$-module $\A$ endowed with a \emph{chiral bracket}, i.e. with a map of 
$\mathcal{D}_{X^2}$-modules 
\[
 \mu:j_*j^*(\A\boxtimes \A)\rightarrow \Delta_!(\A)
\]
which is antisymmetric and satisfies the Jacobi identity.\\
We will denote by $[\,,\,]_{\A}$ the restriction of $\mu$ to $\A\boxtimes \A\hookrightarrow j_*j^*(\A\boxtimes \A)$, and we will refer to it as the \emph{induced Lie$^*$-bracket}.\\ 
By a \emph{commutative chiral algebra} we mean a chiral algebra $\R$ such that $[\,,\,]_{\R}$ vanishes. 
In other words it is a chiral algebra such that the chiral bracket $\mu$ factors as
\[
 j_*j^*(\R\boxtimes \R)\rightarrow \Delta_!(\R\overset{!}{\otimes}\R)\rightarrow\Delta_!(\R).
\]
\noindent Equivalently, $\R$ can be described as a right $\DX$-module with a commutative product on the corresponding 
left $\DX$-module $\R^l:=\R\otimes \Omega_X^*$.\\
\indent A chiral $\A$-module is a right-$\DX$-module $\M$ endowed with a map 
\[
 \mu_{\A,\M}:j_*j^*(\A\boxtimes \M)\rightarrow \Delta_!(\M),
\]
satisfying certain properties. We call the restriction of $\mu_{\A,\M}$ to $\A\boxtimes \M\hookrightarrow j_*j^*(\A\boxtimes \R)$ the \emph{induced Lie$^*$-action}.

For a commutative chiral algebra $\R$, and a chiral $\R$-module $\M$, we say that $\M$ is \emph{central} if the induced Lie$^*$-action of $\mu_{\R,\M}$ vanishes. Equivalently, $\M$ is a chiral $\R$-module such that the chiral action $\mu_{\R,\M}$ factors as
\[
 j_*j^*(\R\boxtimes \M)\rightarrow \Delta_!(\R\overset{!}{\otimes}\M)\rightarrow\Delta_!(\M).
\]

\subsection{Acknowledgments}
The authors would like to thank 
Dennis Gaitsgory and Roman Bezrukavnikov for suggesting the problem, for 
useful discussion during the development of this paper.  
The first author would like to thank the IAS for excellent
working condition.
The work of T-H.C. is supported by NSF under the agreement No.DMS-1128155.

\section{The center at the critical level and the space of opers.}\label{2}
Recall that, for the critical level $\kappa=\kappa_{crit}$, we denote by $\AC$ the chiral
attached to the Lie$^*$-algebra $L=\g\otimes \DX\oplus \Omega_{X}$ as explained in \cite{BD}, and by $\ZZ$ its center. 
Recall that $\ZZ$ is a commutative chiral algebra whose fiber $\zz:=(\ZZ)_{x}$ at any $x\in X$ is isomorphic to
\[
 \zz\simeq \text{End}(\VACO),
\]
where $\VACO$ denotes the vacuum module for $\gc$ given as
\[
 \VACO:=\text{Ind}_{\g[[t]]\oplus \mathbbm{C}}^{\gc}\mathbbm{C}.
\]
Let $\ZC$ be the topological associative algebra attached to $\ZZ$, introduced in \cite{BD}, 3.6.18.\\
One can show that $\ZC$ is in fact isomorphic to the center of 
 the appropriately completed twisted  enveloping algebra $\UG$ of $\gc$, where  
$\UG$ denotes the quotient $U(\hat{\g}_{crit})/(\mathbbm{1}- 1)$, (here $1$ denotes the identity element in $U(\gc)$).\\
Denote by $\check{\g}$ the Langlands dual Lie algebra to $\g$ and let $\OP$ be the $\DX$-scheme of $\check{\g}$-opers on $X$ introduced in \cite{BD2}. 
For every point
$x\in X$, and coordinate $t$ around $x$, denote by $\OPPD$ the ind-scheme of opers on the punctured 
disc $D^\times_{x}=\text{Spec}(\mathbbm{C}(\!(t)\!))$, and by $\OPD$ the space of regular opers. 
Explicitly, an oper $\nabla\in \OPPD$ is the equivalence class, under the Gauge action of $\check{N}(\!(t)\!)$, of elements of the form
\begin{equation}\label{eq3}
 \nabla=\nabla_0+p_{-1}dt+v(t)dt,
\end{equation}
where $v(t)\in \check{\mathfrak{b}}(\!(t)\!)$ and $p_{-1}$ denotes the element $p_{-1}=\sum_{i\in I}f_{i}$, for $I$ index set of simple roots. The condition that the 
oper is regular, i.e. it belongs to $\OPD$, is given by $v(t)\in \check{\mathfrak{b}}[[t]]$.\\
Recall now the following Theorem, established in \cite{FF}.
\begin{thm}\label{th3a}
There exist a canonical isomorphism of $\DX$-algebras
\[
 \ZZ\simeq \text{Fun}(\OP),
\]
in particular we have an isomorhism of commutative algebras $\zz\simeq \text{Fun}(\OPD)$ and of
 commutative topological algebras $\ZC\simeq\text{Fun}(\OPPD).$
\end{thm}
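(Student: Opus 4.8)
The plan is to follow the strategy of Feigin and Frenkel, building the isomorphism through the Wakimoto free-field realization together with a computation of screening cohomology. Since $\ZZ$ is a commutative chiral algebra and $\OP$ is a $\DX$-scheme, a map of $\DX$-algebras is governed by its effect on fibers, so I would first isolate the essential local statement $\zz\simeq\text{Fun}(\OPD)$, obtaining the topological refinement $\ZC\simeq\text{Fun}(\OPPD)$ and the $\DX$-globalization afterwards by taking fibers, completions, and spreading over $X$. After fixing a coordinate $t$ at $x$, I identify $\zz$ with the center $\text{End}(\VACO)$ of the vacuum module, a commutative algebra graded by loop rotation.

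To construct the comparison map I would pass to the Wakimoto realization, embedding $\VACO$ at the critical level into a Fock module for the Heisenberg algebra attached to the dual Cartan $\check{\mathfrak{h}}$. In this realization the $\gc$-action is given by explicit bosonic vertex operators, and the crucial point is that the center is cut out as the joint kernel $\bigcap_{i\in I}\ker S_i$ of the screening operators $S_i$, one for each simple root $i\in I$. A convenient check is that the Segal--Sugawara currents already furnish a distinguished family of central elements, which the screening description then shows to be a full set of generators.

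The next step is to identify this joint kernel with $\text{Fun}(\OPD)$. The Wakimoto bosons parametrize connections on the Cartan torus, which is exactly the source of the Miura transformation; the screening conditions then cut out precisely the image of the regular opers under Miura. This produces a canonical homomorphism $\text{Fun}(\OPD)\to\zz$ landing in the screening kernel, which globalizes to the desired map of $\DX$-algebras $\text{Fun}(\OP)\to\ZZ$ and restricts over the punctured disc to the topological statement.

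The main obstacle, and the technical core of the argument, is upgrading this map from an injection to an isomorphism. This amounts to computing the cohomology of the Felder-type complex assembled from the screening operators and showing it is concentrated in degree zero with the correct graded character --- equivalently, matching the Poincar\'e series of $\zz$ against that of $\text{Fun}(\OPD)$, which rests on the structure of the classical W-algebra $W(\check{\g})$. I expect this graded-dimension comparison to demand the most work; once it is in place, surjectivity is immediate and the $\DX$-algebra and topological versions follow formally by passing to fibers and completions.
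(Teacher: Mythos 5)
The paper does not actually prove Theorem \ref{th3a}: it is quoted directly from Feigin--Frenkel \cite{FF} (see also \cite{F2}), and your sketch --- Wakimoto free-field realization, identification of the center with the joint kernel of the screening operators, the Miura transformation to $\check{\g}$-opers, and the graded-character comparison to upgrade the injection to an isomorphism --- is precisely the strategy of that reference. Your outline is therefore correct and follows essentially the same route as the proof the paper relies on.
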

\subsection{Poisson structure on the space of opers}
Recall now the Poisson structure on $\OPPD$. 
Consider the space $Conn_{\check G}(D^\times_x)$ of all connections on the trivial $\check G$-bundle on 
$D_x^\times$, i.e. operators of the form
\[
 \nabla_0+\phi(t)dt, \text{where } \phi(t) \in \check{\g}\otimes\Omega_{D_x^*}.
\]
If we denote by $\hgc^*$ the topological dual of $\hgc$, then we can identify $Conn_{\check G}(D_x^\times)$ with the hyperplane in
$\hgc^*$ consisting of all functions $h:\hgc\ra\bC$ such that the restriction of $h$ to
the center of $\hgc$ is the identity.
Under this identification, the coadjoint
action of $\check G(\!(t)\!)$ on $\hgc^*$ corresponds to the gauge action of $\check G(\!(t)\!)$ on $Conn_{\check G}(D_x^\times)$. The space $\hgc^*$ carries a natural 
Poisson structure that induces a Poisson structure on $Conn_{\check G}(D_x^*)$.\\
Consider now the action of $\check{N}(\!(t)\!)$ on $\hgc^*$. The map $\mu:\hgc^*\rightarrow (\check{\mathfrak{n}}(\!(t)\!))^*\simeq \check{\g}/\check{\mathfrak{b}}\otimes \Omega_{D_x^*}$ 
is a moment map for this action and in particular on $Conn_{\check{G}}(D_x^\times)$. 
Moreover, from \cite[\S 3.7.14]{BD2}, we have an identification
\[
 \OPPD\simeq(\mu^{-1}(l)\!)/\check N(\!(t)\!).
\]
for any non-degenerate characters $l$ of $\check{\mathfrak{n}}(\!(t)\!)$.
In other words we can construct $\OPPD$ as the Hamiltonian reduction of $Conn_{\check G}(D_x^\times)$ along $\mu$. 
In particular $\OPPD$ acquires a Poisson structure 
$$\{\,,\,\}:Fun(\OPPD)\otimes Fun(\OPPD)\rightarrow Fun(\OPPD).$$
The Poisson structure on $\OPPD$ gives $\Omega(\OPPD)$ a structure of Lie algebroid aver it. We will denote by $\omega$ the resulting 
anchor map $\omega:\Omega(\OPPD)\rightarrow T(\OPPD)$, where $T(\OPPD)$ is the tangent sheaf to $\OPPD$.\\

\noindent Denote by $I_{0}$ the ideal corresponding to $\OPD\subset \OPPD$. Recall that, in \cite{FG3} it is shown that $I_0$ is co-isotropic, i.e. 
that $\{I_0,I_0\}\subset I_0$. In particular $I_0/I_0^2$  is a Lie algebroid over $\OPD$ and we have the following commutative diagram:
\[
 \xymatrix{0\ar[r]& I_0/I_0^2\ar[r]\ar[d]&\Omega(\OPPD)|_{\OPD}\ar[r]\ar[d]^{\omega}&\Omega(\OPD)\ar[r]\ar[d]&0,\\
0\ar[r]&T(\OPD)\ar[r]&T(\OPPD)|_{\OPD}\ar[r]&N_{\OPD/\OPPD}\ar[r]&0}
\]
\noindent where $N_{\OPD/\OPPD}$ denotes the normal bundle of $\OPD\subset\OPPD$ (in particular
 $N_{\OPD/\OPPD}=\left( I_0/I_0^2\right)$).
\subsection{}\label{alg}
Recall now the sub-scheme $\OPDL\subset\OPPD$ of $\lambda$-regular opers, for $\lambda$ dominant coweight of $\g$. This sub-scheme consists of equivalence classes of connections of the form
\begin{equation}\label{eq4}
\nabla=\nabla_0+\left(\sum_{i}t^{<\alpha_i,\lambda>}\cdot f_i\right)dt+v(t)dt,\,\,\,\,\,\mbox{ for $v(t)\in \check{\mathfrak{b}}[[t]]$ }
\end{equation}
under the action of $\check{N}[[t]]$.   In particular $\OPD=\text{Op}_{\g}^{0,reg}.$\\
Note that the space $\OPPD$ of opers on the punctures disc, can be constructed as the Hamiltonian reduction along the map $\mu: Conn_{\check{G}}(D_x^*)\rightarrow \check{\mathfrak{n}}(\!(t)\!)^*\simeq \check{\g}/\check{\mathfrak{b}}\otimes \Omega_{D_x^*}$ by choosing the fiber of the character  $\left(\sum_{i}t^{<\alpha_i,\lambda>}\cdot f_i\right)dt\in \check{\mathfrak{n}}(\!(t)\!)^*$,
\[
\OPPD\simeq(\mu^{-1} \left(\sum_{i}t^{<\alpha_i,\lambda>}\cdot f_i\right)dt )/\check{N}(\!(t)\!).
\]
\noindent Denote by $I_{\lambda}$ the ideal corresponding to $\OPDL\subset \OPPD$. We have the following lemma:
\begin{lemma}\label{l2}
The sub-scheme $\OPDL$ is co-isotropic, i.e. $\{I_{\lambda},I_{\lambda}\}\subset I_{\lambda}$.
\end{lemma}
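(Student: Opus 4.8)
The plan is to prove coisotropy of $\OPDL$ exactly as in the case $\lambda=0$ of \cite{FG3}, by exhibiting $\OPDL$ as the Hamiltonian reduction of the \emph{same} coisotropic space of regular connections; the only thing that changes with $\lambda$ is the value of the moment map at which one reduces, and the one place where dominance enters. First I would record, using the identification recalled just above the lemma, the presentation
\[
\OPPD\simeq \mu^{-1}\Big(\sum_i t^{<\alpha_i,\lambda>} f_i\,dt\Big)\big/\check N(\!(t)\!),
\]
so that $\OPPD$ is obtained as the reduction at the character $\chi_\lambda:=\sum_i t^{<\alpha_i,\lambda>} f_i\,dt\in\check{\mathfrak n}(\!(t)\!)^*$. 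Since $\lambda$ is dominant, each exponent $<\alpha_i,\lambda>$ is $\ge 0$, so $\chi_\lambda$ is a genuine, and still nondegenerate, character of $\check{\mathfrak n}(\!(t)\!)$, which is why this presentation is legitimate.

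The key observation — and the only step that uses dominance — is that in this presentation $\OPDL$ is the image, under the reduction map, of the subspace $Conn_{\check G}(D_x)\subset Conn_{\check G}(D_x^\times)$ of connections regular on the disc, i.e. those $\nabla_0+\phi(t)dt$ with $\phi\in\check{\g}[[t]]$. Indeed, a representative of a point of $\OPDL$ has the form $\nabla_0+\big(\sum_i t^{<\alpha_i,\lambda>} f_i+v(t)\big)dt$ with $v\in\check{\mathfrak b}[[t]]$; because $\lambda$ is dominant the first summand already lies in $\check{\g}[[t]]$, so the whole connection form $\phi=\sum_i t^{<\alpha_i,\lambda>} f_i+v$ is regular. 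Thus $\OPDL$ is the image of $Conn_{\check G}(D_x)\cap\mu^{-1}(\chi_\lambda)$, precisely as $\OPD$ is the image of $Conn_{\check G}(D_x)\cap\mu^{-1}(p_{-1}dt)$ in the case $\lambda=0$. (For non-dominant $\lambda$ the summand $\sum_i t^{<\alpha_i,\lambda>} f_i$ would acquire poles and this identification would fail, which is why the lemma is stated for $\lambda$ dominant.)

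Next I would check that $Conn_{\check G}(D_x)$ is coisotropic in $Conn_{\check G}(D_x^\times)$. Under the embedding of $Conn_{\check G}(D_x^\times)$ as the level hyperplane in $\hgc^*$, the regular connections are exactly the annihilator of $\check{\g}[[t]]\subset\hgc$: a connection form $\phi$ is regular if and only if $\text{Res}\,\kappa(\phi,\xi)\,dt=0$ for all $\xi\in\check{\g}[[t]]$. For the Kirillov--Kostant bracket the annihilator of a Lie subalgebra is coisotropic, and here $\check{\g}[[t]]=\check{\g}\otimes\bC[[t]]$ is a subalgebra of $\hgc$ because the cocycle $\text{Res}(f\,dg)$ vanishes on $\bC[[t]]$; concretely, for $a,b\in\check{\g}[[t]]$ viewed as affine-linear functions one has $\{a,b\}=[a,b]\in\check{\g}[[t]]$, so the vanishing ideal of $Conn_{\check G}(D_x)$ is closed under $\{\,,\,\}$.

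Finally I would descend coisotropy through the reduction. Given $\bar f,\bar g\in I_\lambda$, lift them to $\check N(\!(t)\!)$-invariant functions on $\mu^{-1}(\chi_\lambda)$ that vanish on $Conn_{\check G}(D_x)\cap\mu^{-1}(\chi_\lambda)$, extend them to functions lying in the vanishing ideal of $Conn_{\check G}(D_x)$, and compute the reduced bracket as the ambient Poisson bracket of these lifts restricted to $\mu^{-1}(\chi_\lambda)$. Coisotropy of $Conn_{\check G}(D_x)$ forces this bracket to vanish again on the regular locus, hence to descend to an element of $I_\lambda$, giving $\{I_\lambda,I_\lambda\}\subset I_\lambda$. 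I expect the genuine obstacle to be exactly this descent step, where one must reconcile the fact that the regular locus defining $\OPDL$ is only $\check N[[t]]$-invariant while the reduction is taken along the full loop group $\check N(\!(t)\!)$; as in \cite{FG} and \cite{FG3} one handles this using that $\OPDL\hookrightarrow\OPPD$ is a closed embedding and that the reduced bracket is computed by the invariant lifts above, so that the $\check N[[t]]$-quotient is compatible with the $\check N(\!(t)\!)$-reduction.
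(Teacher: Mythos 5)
Your proposal is correct and follows essentially the same route as the paper: both exhibit $\OPDL$ as the Hamiltonian reduction along $\mu$ at the character $\left(\sum_i t^{<\alpha_i,\lambda>}f_i\right)dt$ of the locus $Conn^{\text{reg}}$ of regular connections, which is coisotropic because its vanishing ideal is generated by the Lie subalgebra $\check{\g}[[t]]\subset\hgc$. The extra details you supply (dominance ensuring the character term lies in $\check{\g}[[t]]$, and the descent of coisotropy through the reduction) are exactly the points the paper leaves implicit.
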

\begin{proof}
We follow the argument in \cite[Lemma 4.4.1]{FG3}. Let $Conn^{\text{reg}}\subset Conn_{\check G}(D_x^\times)$
be the subscheme of regular connections, i.e., connections of the form $\nabla=d+f(t)dt$ 
where $f(t)\in\check\fg[[t]]$. It is co-isotropic since the ideal sheaf of $Conn^{\text{reg}}$
is generated by $\check\fg[[t]]$ which is a subalgebra.
By (\ref{eq4}), we have 
\[\OPDL\simeq(\mu^{-1} \left(\sum_{i}t^{<\alpha_i,\lambda>}\cdot f_i\right)dt\cap Conn^{\text reg})/\check{N}(\!(t)\!)\]
which implies the claim of the Lemma.
\end{proof}
From the above lemma it follows that $I_{\lambda}/I_{\lambda}^2$  is a Lie algebroid over $\OPDL$ and we have the following commutative diagram:
\begin{equation}\label{eq5}
 \xymatrix{0\ar[r]& I_{\lambda}/I_{\lambda}^2\ar[r]\ar[d]&\Omega(\OPPD)|_{\OPDL}\ar[r]\ar[d]^{\omega}&\Omega(\OPDL)\ar[r]\ar[d]&0,\\
0\ar[r]&T(\OPDL)\ar[r]&T(\OPPD)|_{\OPD}\ar[r]&N_{\OPDL/\OPPD}\ar[r]&0}
\end{equation}
\noindent where $N_{\OPDL/\OPPD}$ denotes the normal bundle of $\OPDL\subset\OPPD$.
\subsection{}\label{sub3} Let $\OPDU$ the sub-scheme of $\OPPD$ consisting of opers that are unramified as  local systems. Clearly we have $\OPDL\subset\OPDU$. Consider now the short exact sequence 
\[
0\rightarrow N_{\OPDL/\OPDU}\rightarrow N_{\OPDL/\OPPD}\rightarrow N_{\OPDU/\OPPD}\rightarrow 0.
\]
\noindent Following \cite{BD2} and \cite{FG3} we have the following.
\begin{prop}\label{p8}
The natural map $\Omega(\OPDL)\rightarrow N_{\OPDL/\OPPD}$ is injective, in particular we have
\[
\Omega(\OPDL)\simeq N_{\OPDL/\OPDU}.
\]
\end{prop}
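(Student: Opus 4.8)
The plan is to factor the natural arrow through the intermediate pair $\OPDL\subset\OPDU$ and to exploit that $\OPDU$ is symplectic. Recall from \cite{BD2} and \cite{FG3} that $\OPDU$ is a symplectic leaf of the Poisson ind-scheme $\OPPD$: being unramified as a $\check{\g}$-local system is invariant under the gauge action of $\check G(\!(t)\!)$, and under the reduction $\OPPD\simeq\mu^{-1}(\cdot)/\check N(\!(t)\!)$ the leaves are precisely the loci of fixed underlying local system. Hence $\OPDU$ is a Poisson subvariety on which the induced bivector is non-degenerate, so the anchor restricts to an isomorphism $\omega_U\colon\Omega(\OPDU)\xrightarrow{\sim}T(\OPDU)$. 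Because Hamiltonian vector fields are tangent to a Poisson subvariety, $\omega$ carries $\Omega(\OPPD)|_{\OPDL}$ into $T(\OPDU)|_{\OPDL}$; therefore the composite $\Omega(\OPDL)\to N_{\OPDL/\OPPD}\to N_{\OPDU/\OPPD}$ (the right column of (\ref{eq5}) followed by the projection from \ref{sub3}) vanishes, and the natural map factors as $\Omega(\OPDL)\xrightarrow{c'}N_{\OPDL/\OPDU}\hookrightarrow N_{\OPDL/\OPPD}$. Since the last inclusion is injective, it suffices to prove that $c'$ is an isomorphism.

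To analyze $c'$ I would apply the snake lemma to the two conormal/normal sequences for $\OPDL\subset\OPDU$, namely the top row $0\to N^{*}_{\OPDL/\OPDU}\to\Omega(\OPDU)|_{\OPDL}\to\Omega(\OPDL)\to 0$ and the bottom row $0\to T(\OPDL)\to T(\OPDU)|_{\OPDL}\to N_{\OPDL/\OPDU}\to 0$, connected by $\omega_U$ in the middle (an isomorphism), by $c'$ on the right, and by the map $a'\colon N^{*}_{\OPDL/\OPDU}\to T(\OPDL)$ on the left coming from the coisotropy of $\OPDL$ (Lemma \ref{l2}). Since the middle map is an isomorphism, the snake sequence shows at once that $c'$ is surjective and that $\ker c'\simeq\operatorname{coker}a'$. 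Now $\omega_U$ identifies the image of $a'$ with the symplectic orthogonal $T(\OPDL)^{\perp}$ inside $T(\OPDU)|_{\OPDL}$, and coisotropy gives $T(\OPDL)^{\perp}\subseteq T(\OPDL)$; thus $a'$ is surjective, equivalently $\OPDL$ is Lagrangian in $\OPDU$, if and only if $c'$ is injective. In this way the whole proposition reduces to the single statement that $\OPDL$ is Lagrangian in $\OPDU$.

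The remaining, and main, point is therefore to upgrade the coisotropy of Lemma \ref{l2} to isotropy, i.e. to show $T(\OPDL)\subseteq T(\OPDL)^{\perp}$, for every dominant $\lambda$. For $\lambda=0$ this is exactly the Lagrangian property of $\OPD\subset\OPDU$ established in \cite{FG3}. For general $\lambda$ I would run the same moment-map computation as in \cite[Lemma 4.4.1]{FG3}: using the normal form (\ref{eq4}), realize $\OPDL$ as the reduction of $Conn^{\text{reg}}$ along the shifted character $\bigl(\sum_i t^{\langle\alpha_i,\lambda\rangle}f_i\bigr)dt$ and compute the pairing of $T(\OPDL)$ with itself through the Kirillov--Kostant form on $\hgc^{*}$. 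The expected subtlety is that the twist by $\sum_i t^{\langle\alpha_i,\lambda\rangle}f_i$ merely shifts the $t$-adic filtration on $\check{\mathfrak b}(\!(t)\!)$ and does not alter the residue pairing responsible for self-annihilation, so that the dimension count matching the $\lambda=0$ case persists; verifying this compatibility is where the real work lies.
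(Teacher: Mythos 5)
You have not proved the proposition; you have reduced it to two statements of essentially the same depth, neither of which you establish. The first is the assertion that $\OPDU$ is a smooth symplectic leaf, i.e.\ that the anchor descends to an isomorphism $\omega_U:\Omega(\OPDU)\xrightarrow{\sim}T(\OPDU)$. This is not a formal consequence of ``$\OPDU$ is the locus of trivial underlying local system.'' What is actually available from \cite{FG3} is only that the ideal of $\text{Spec}(\ZU)$ is a Poisson ideal (it is the smallest subscheme containing $\bigsqcup_\lambda\text{Spec}(\zl)$ stable under the algebroid $\Omega(\OPPD)$); that suffices for your factorization through $N_{\OPDL/\OPDU}$, but non-degeneracy of the induced bivector amounts to the identifications $N^{*}_{\OPDU/\OPPD}\simeq\ker\omega\simeq H^0_{dR}(D^\times_x,\check{\g}_{\nabla})$ and $N_{\OPDU/\OPPD}\simeq\mathrm{coker}\,\omega\simeq H^1_{dR}(D^\times_x,\check{\g}_{\nabla})$, which is precisely the hard computation in \cite{BD2} that the proposition is quoting, and it must also contend with the fact that $\OPDU$ is a priori only sandwiched between $\bigsqcup_\lambda\text{Spec}(\zl)$ and its formal neighborhood, so smoothness and reducedness cannot be assumed. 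The second gap is at the end: granting $\omega_U$ an isomorphism, your snake-lemma step is correct algebra, but it converts the injectivity of $\Omega(\OPDL)\to N_{\OPDL/\OPPD}$ into the \emph{exactly equivalent} statement that $\OPDL$ is Lagrangian in $\OPDU$, and there you stop. The appeal to \cite[Lemma 4.4.1]{FG3} yields only coisotropy --- that is what the paper's own Lemma \ref{l2} adapts --- not the Lagrangian property, and the ``dimension count'' you defer for general $\lambda$ is the entire missing content. The net effect is a clean restatement, not a proof.

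For comparison: the paper itself gives no proof, quoting the statement from \cite{BD2} and \cite{FG3}. The argument there proceeds not through a leaf/Lagrangian dichotomy but through the explicit identification, for an oper with underlying local system $(\mathcal F,\nabla)$, of the kernel and cokernel of the anchor with $H^0_{dR}$ and $H^1_{dR}$ of $\check{\g}_{\mathcal F,\nabla}$ on $D^\times_x$, and of $N_{\OPDL/\OPPD}$ with sections of $\check{\g}_{\mathcal F,\nabla}$ over $D^\times_x$ modulo those regular at $x$; injectivity and the identification of the image with $N_{\OPDL/\OPDU}$ are then read off. If you want a self-contained proof, you should carry out that computation (checking that the twist by $\sum_i t^{\langle\alpha_i,\lambda\rangle}f_i$ does not disturb it), since your two unproven inputs both presuppose it.
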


\subsection{Deformation of the commutative chiral algebra $\ZZ$}
Recall the topological associative algebra $\ZC\simeq Fun(\OPPD)$ attached to $\ZZ$ at the point $x\in X$.\\
 For a dominant weight $\lambda$,  recall that we denote by $\VAC$ the $\gc$-module 
 $$\VAC:=\text{Ind}_{\g[[t]]\oplus \mathbbm{C}}^{\gc}V^{\lambda},$$
 where $V^{\lambda}$ denotes the irreducible finite dimensional representation of $\g$ of highest weight $\lambda$. In particular, recall that the fiber of $\ZZ$ at $x$ is isomorphic to $\zz:=\text{End}(\mathbbm{V}^{0})$.\\
\indent We will now construct a different commutative chiral algebra $\ZL$, isomorphic to $\ZZ$ on $X-x$ and such that $\zl:=\left( \ZL\right)_x\simeq \text{End}(\VAC)$. \\
Note that, in particular, the isomorphism $\ZZ|_{X-x}\simeq \ZL|_{X-x}$ guarantees that the topological associative algebra $\hat{\mathfrak{Z}}_{crit,\lambda}$ attached to $\ZL$ will be isomorphic to $\ZC$.

\subsection{}

Consider the following general set-up.  Let $\A$ be a chiral algebra on $X$, and let $M$ be a cyclic $\A$-module
 supported at $x\in X$, i.e. a module generated by a single element $v\in M$. Then the following is true.

\begin{prop}\label{p3}
Under the above assumptions, there exist a chiral algebra $\A'$ on $X$ such that $\A|_{X-x}\simeq\A'|_{X-x}$ and such that the fiber $i_x^!(\A')[1]$ is isomorphic to $M$.
\end{prop}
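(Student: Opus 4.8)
The plan is to realize $\A'$ as an elementary modification of $\A$ supported at the single point $x$, cut out inside the meromorphic extension of $\A$ by the condition of annihilating the cyclic vector. Let $\jmath\colon X-x\hookrightarrow X$ be the open embedding and set $P:=\jmath_*\jmath^*\A$, the right $\D_X$-module of sections of $\A$ with arbitrary poles allowed at $x$. Because the chiral bracket and the $\mathcal D_{X^2}$-action are defined away from the diagonal, they extend to $P$, so $P$ is again a chiral algebra with $P|_{X-x}\simeq\A|_{X-x}$; any chiral subalgebra $\A'\subseteq P$ restricting to $\A$ over $X-x$ will then automatically satisfy $\A'|_{X-x}\simeq\A|_{X-x}$. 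The whole problem is thus to exhibit the correct subalgebra of $P$ and to compute its $!$-fiber at $x$.

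\emph{Construction.} Since $M$ is supported at $x$ we write $M\simeq i_{x,*}(M_x)$ with $M_x:=i_x^!(M)$, and by the equivalence between chiral $\A$-modules supported at $x$ and discrete modules over the topological associative algebra attached to $\A$ at $x$ (see \cite{BD}), cyclicity of $M$ means exactly that $M_x$ is spanned by the action of meromorphic sections of $\A$ on $v$. Pairing the chiral action $\mu_{\A,M}$ with the fixed vector $v$ produces a morphism of right $\D_X$-modules
\[
q\colon P\longrightarrow i_{x,*}(M_x),\qquad s\longmapsto s\cdot v,
\]
which is surjective precisely because $v$ generates $M$. I then define $\A'$ to be its kernel, giving the short exact sequence
\[
0\longrightarrow\A'\longrightarrow P\overset{q}{\longrightarrow}i_{x,*}(M_x)\longrightarrow 0 .
\]
Restricting to $X-x$ annihilates the skyscraper term, so $\A'|_{X-x}\simeq P|_{X-x}\simeq\A|_{X-x}$. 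Applying $i_x^!$ to the resulting triangle and using that $i_x^!P=0$ for the maximal extension $P=\jmath_*\jmath^*\A$, together with $i_x^!\,i_{x,*}(M_x)\simeq M_x$, the connecting morphism yields $i_x^!(\A')[1]\simeq M_x$, i.e. $i_x^!(\A')[1]\simeq M$, which is the second assertion.

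It remains to see that $\A'$ is closed under the chiral operations of $P$, and this is the heart of the matter. The key observation is that the stabilizer of a vector under a Lie$^*$-action is a Lie$^*$-subalgebra: if $s_1\cdot v=s_2\cdot v=0$, then the chiral Jacobi identity relating $\mu$ and $\mu_{\A,M}$ forces $[s_1,s_2]\cdot v=0$, exactly as in the classical computation $[s_1,s_2]v=s_1(s_2v)-s_2(s_1v)$. Upgrading this from the induced Lie$^*$-bracket to the full chiral bracket $\mu$ requires the compatibility axiom of the chiral module $M$ over $\A$, applied after the extension $\jmath_*\jmath^*$; granting it, $\mu_P$ restricts to a chiral bracket
\[
\mu_{\A'}\colon j_*j^*(\A'\boxtimes\A')\longrightarrow\Delta_!(\A'),
\]
and antisymmetry and the Jacobi identity are inherited from $P$.

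The two points demanding genuine care, and where I expect the main obstacle to lie, are therefore (i) that $q$ intertwines the chiral $\A$-module structures after passing to meromorphic sections, which is where the Beilinson--Drinfeld compatibility axiom between $\mu$ and $\mu_{\A,M}$ must be checked at the level of the $\jmath_*\jmath^*$-extension, and (ii) that the fiber computation is an isomorphism rather than merely a surjection, for which one invokes cyclicity for surjectivity and the torsion-freeness of $\A$ as a $\D_X$-module to rule out sections spuriously supported at $x$. In the commutative case relevant to us, with $\A=\ZZ$ and $M$ the cyclic $\ZZ$-module whose fiber is $\text{End}(\VAC)$, the map $q$ is a homomorphism of commutative chiral algebras, so $\A'=\ZL$ is again commutative, and the identification $\zl\simeq\text{End}(\VAC)$ on the fiber is read off from $M_x\simeq\text{End}(\VAC)$.
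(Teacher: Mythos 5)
Your construction is exactly the one in the paper: form the meromorphic extension $j_*j^*(\A)$ at $x$, map it onto $i_*(M)$ via the chiral action on the cyclic vector $v$, and define $\A'$ as the kernel of the resulting surjection, reading off the fiber from the short exact sequence. The paper states the verification that $\A'$ is a chiral subalgebra and that $i_x^!(\A')[1]\simeq M$ as ``easy to see,'' whereas you spell these points out, but the argument is the same.
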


\begin{proof}
\quash{
Denote by $\hat{A}_x$ the topological associative algebra attached to it, and consider the natural projection $j_*j^*(\A)\rightarrow \hat{\A}_x.$ Recall that the category of $\A$-modules suppirted at $x$ is equivalent to the category of $\hat{A}_x$-modules. We can therefore regard $M$ as a $j_*j^*(\A)$-module via the above projection and consider the surjection
\[
j_*j^*(\A)\otimes M\rightarrow i_!(M)\rightarrow 0.
\]}
The existence of a cyclic vector $v\in M$ ensures the surjectivity of the map
\[
j_*j^*(\A)\rightarrow i_*(M),
\]
obtained by composing the action map with the map $j_*j^*(\A)\rightarrow j_*j^*(\A)\otimes M
$ given by $a\mapsto a\otimes v$. We define $\A'$ as the kernel of the above map, i.e we have 
\[
0\rightarrow \A'\rightarrow j_*j^*(\A)\rightarrow i_*(M)\rightarrow 0.
\]
It is easy to see that $\A'$ satisfies the required properties.
\end{proof}

By applying the above proposition to the case $\B=\ZZ$, and $M=\B'_x=\zl$, we obtain the desired chiral algebra $\ZL$. 
\quash{
\\Note, in fact, that from the surjection $\ZC\twoheadrightarrow \zl$, it follows that $i_{!}(\zl)$, regarded as a $\ZZ$ module, is cyclic.}

Let now $\OPL$ be the $\DX$-scheme of $\check{\g}$-opers on $X$ that have $\lambda$-regularity at $x\in X$, i.e. that can be written as
\[
\nabla=\nabla_0+\left(\sum_{i}t^{<\alpha_i,\lambda>}\cdot f_i\right)dt+v(t)dt,\,\,\,\,\,\mbox{ for $v(t)\in \check{\mathfrak{b}}(\!(t)\!)$ }
\]
around the point $x\in X$, with coordinate $t$.
We have the following.
\begin{thm}\label{th10}
There exist a canonical isomorphism of $\DX$-algebras
\[
 \ZL\simeq \text{Fun}(\OPL),
\]
in particular we have an isomorhism of commutative algebras $\zl\simeq \text{Fun}(\OPDL)$ and of
 commutative topological algebras $\hat{\mathfrak{Z}}_{crit,\lambda}\simeq\ZC\simeq\text{Fun}(\OPPD).$
\end{thm}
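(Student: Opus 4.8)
The plan is to realize both $\ZL$ and $\text{Fun}(\OPL)$ as the \emph{modification at $x$} (in the sense of Proposition \ref{p3}) of one and the same pair of canonically identified data, and then to deduce the isomorphism from the canonicity of that construction. By Theorem \ref{th3a} we already have a canonical isomorphism of commutative $\DX$-algebras $\ZZ \simeq \text{Fun}(\OP)$, identifying the attached topological algebras $\ZC \simeq \text{Fun}(\OPPD)$. Since the $\lambda$-regularity condition is imposed only at $x$, the $\DX$-schemes $\OPL$ and $\OP$ agree on $X-x$, so restriction gives
\[
\ZL|_{X-x} \simeq \ZZ|_{X-x} \simeq \text{Fun}(\OP)|_{X-x} = \text{Fun}(\OPL)|_{X-x}.
\]
It therefore remains to match the two objects over the point $x$, i.e.\ to match the two modifications.

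First I would identify the fibers. On the chiral side the $!$-fiber is $\zl = \End(\VAC)$ by the very construction of $\ZL$ via Proposition \ref{p3}. The key input is the identification $\zl \simeq \text{Fun}(\OPDL)$ coming from the theory of Weyl modules and opers (Feigin--Frenkel together with \cite{FG3, BD2}): for $\lambda$ dominant the action of the center $\ZC \simeq \text{Fun}(\OPPD)$ on $\VAC$ factors through the surjection $\text{Fun}(\OPPD) \twoheadrightarrow \text{Fun}(\OPDL)$ dual to the closed embedding $\OPDL \hookrightarrow \OPPD$, and this realizes $\End(\VAC)$ as $\text{Fun}(\OPDL)$, a cyclic $\ZC$-module generated by the image of $\mathbbm 1$. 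Under Theorem \ref{th3a} this is exactly the cyclic $\text{Fun}(\OPPD)$-module that one uses to form $\text{Fun}(\OPL)$, and crucially the identification is one of $\ZC$-modules, not merely of vector spaces.

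Next I would check that $\text{Fun}(\OPL)$ is itself the modification of $\text{Fun}(\OP)$ at $x$: there is a short exact sequence of right $\DX$-modules
\[
0 \to \text{Fun}(\OPL) \to j_*j^*\text{Fun}(\OP) \to i_*\text{Fun}(\OPDL) \to 0,
\]
whose right-hand map is induced by the restriction of opers $\text{Fun}(\OPPD) \twoheadrightarrow \text{Fun}(\OPDL)$. This exhibits $\text{Fun}(\OPL)$ as the object $\A'$ of Proposition \ref{p3} for $\A = \text{Fun}(\OP) \simeq \ZZ$ and $M = \text{Fun}(\OPDL) \simeq \zl$. Since $\ZL$ is by definition the kernel of the corresponding map for the isomorphic data, the isomorphisms of Theorem \ref{th3a} (applied to $j_*j^*$) and of the previous paragraph (on the fiber) fit into a commutative square intertwining the two defining surjections; passing to kernels yields the desired canonical isomorphism of commutative $\DX$-algebras $\ZL \simeq \text{Fun}(\OPL)$. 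Taking the $!$-fiber at $x$ and the associated topological algebra then gives $\zl \simeq \text{Fun}(\OPDL)$ and $\hat{\mathfrak{Z}}_{crit,\lambda} \simeq \ZC \simeq \text{Fun}(\OPPD)$, which are the asserted ``in particular'' clauses.

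The main obstacle is the fiber identification $\End(\VAC) \simeq \text{Fun}(\OPDL)$ for dominant $\lambda$: this is the one genuinely deep input, it is precisely where dominance enters, and it requires the description of the center acting on Weyl modules via $\lambda$-regular opers. A secondary point demanding care is that every identification above must be made compatibly with the right $\DX$-module structures (not just the $\mathcal O_X$-module structures), so that the modification sequence is $\DX$-linear and the resulting isomorphism is one of commutative chiral algebras, i.e.\ of $\DX$-algebras, rather than of mere sheaves.
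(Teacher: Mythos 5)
The paper states Theorem \ref{th10} without giving a proof, so there is nothing to compare line by line; your reconstruction is the natural argument and, in my judgement, the intended one. You correctly isolate the two real inputs: the Feigin--Frenkel isomorphism $\ZZ\simeq\text{Fun}(\OP)$ of Theorem \ref{th3a}, and the identification $\zl=\End(\VAC)\simeq\text{Fun}(\OPDL)$ as cyclic $\ZC$-modules for dominant $\lambda$, which is exactly the Weyl-module theorem of \cite{FG6} that the paper itself invokes in Lemma \ref{l1}; given these, the compatibility of the two ``act on the cyclic vector'' maps $j_*j^*\ZZ\to i_*\zl$ and $j_*j^*\text{Fun}(\OP)\to i_*\text{Fun}(\OPDL)$ is formal, and passing to kernels gives the isomorphism of $\DX$-algebras. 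The one step you assert rather than prove is that $\text{Fun}(\OPL)$ really \emph{is} the kernel of the second map, i.e.\ that the sequence $0\to\text{Fun}(\OPL)\to j_*j^*\text{Fun}(\OP)\to i_*\text{Fun}(\OPDL)\to 0$ is exact: injectivity on the left is clear since $\OPL$ and $\OP$ agree on $X-x$ and these sheaves are $\mO_X$-torsion-free, but the identification of the cokernel with $i_*\text{Fun}(\OPDL)$ (equivalently, that the only condition cutting out $\text{Fun}(\OPL)$ inside $j_*j^*\text{Fun}(\OP)$ at $x$ is regularity along $\OPDL\subset\OPPD$, with the quotient being functions on $\OPDL$ itself rather than some larger polar quotient) deserves an explicit verification, e.g.\ by the same fiber computation $i_x^!(\text{Fun}(\OPL))[1]\simeq\text{Fun}(\OPDL)$ that characterizes the output of Proposition \ref{p3}. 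With that point made explicit, the argument is complete and correctly places the use of dominance where it belongs.
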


\subsection{Poisson structure on $\ZZ$ and $\ZL$ }
\subsection{} \label{ah}
We will now recall the Poisson structure on $\ZZ$ coming from the chiral algebra $\AC$. \\
Consider the $\mathbbm{C}[[\hbar]]$-family of chiral algebras $\AH$, corresponding to the bilinear form $\kappa=\kappa_{crit}+\hbar \cdot \kappa_{kill}$, and denote by $[\,,\,]_{\AH}$ the Lie$^*$-bracket induced by the chiral bracket on $\AH$. For two sections $z,w\in \ZZ$, consider two arbitrary sections $z_{\hbar},\,w_{\hbar}$ whose values, modulo $\hbar$, are $z$ and $w$ respectively. Then we can define a Poisson bracket $\{z,w\}$ in the following way:
\[
\{z,w\}:=\frac{1}{h}[z_{\hbar},w_{\hbar}]_{\AH}\pmod \hbar.
\] 
The above expression makes sense since $[z_{\hbar},w_{\hbar}]_{\AH}$ vanishes $\pmod \hbar$, and it is easy to see that the element $\{z,w\}$ is indeed in $\Delta_!(\ZZ)$. The above bracket defines a Poisson structure on $\ZZ$.\\
The isomorphism stated in Theorem \ref{th3a} is compatible with this Poisson bracket.\\
\subsection{} Similarly we will now define a Poisson bracket on $\ZL$ that will be compatible with the isomorphism in Theorem \ref{th10}. To do this, we need to introduce a different chiral algebra $\ACL$ with the property that $\AC|_{X-x}\simeq \ACL|_{X-x}$ and with fiber at $x$ isomorphic to $\VAC$. In particular the above conditions will guarantee that the center of $\ACL$ will be the commutative chiral algebra $\ZL$ previously defined.\\
The construction of $\ACL$ follows from Proposition \ref{p3}, by taking $\A$ to be $\AC$, the module $M$ to be $\VAC$ and $v\in \VAC$ any highest weight vector in $V^{\lambda}$.

Now we can proceed as in the previous subsection and define a Poisson structure on $\ZL$. We simply consider the $\mathbbm{C}[[\hbar]]$-family of chiral algebras $\AH'$, given by modifying $\AH$ in the same way as we did for $\AC$, and define the bracket of two elements in $\ZL$ by the same formula. The fact that $\ZL$ is the center of $\ACL$ guarantees that the above expression still makes sense and in fact defines a Lie$^*$-bracket on $\ZL$. Moreover the isomorphism from Theorem \ref{th10} can be upgraded to an isomorphism of Poisson algebras. 
\subsection{}Consider now the chiral algebra $\ZL$ and recall the Lie algebroid $I_{\lambda}/I_{\lambda}^2$ introduced in \ref{alg}. Recall the diagram \ref{eq5}. Because of the Poisson structure on $\ZL$, the $\ZL$-module $\Omega(\ZL)$ acquires a structure of Lie$^*$-algebroid. Denote by $$\omega:\Omega(\ZL)\rightarrow T(\ZL)$$ the anchor map, where $T(\ZL)$ denotes the Lie$^*$-algebroid of vector fields on $\ZL$. \\

\noindent Recall that, for a $\DX$-scheme $\B,$ geometric points of Spec($\B_x$) are the same as horizontal sections of Spec($\B)$ over the formal disc Spec($\mathbbm{C}[[t]]$), for $t$ a coordinate around $x\in X$. Let now $z$ be a geometric point of Spec($\B_x$), corresponding to an horizontal section $\phi_z:\mathbbm{C}[[t]]\rightarrow B$. As it is explained in \cite{FG} Sect. 3.5. we have 
\begin{equation}\label{eq8}
\phi_z^*(T(\B)\!)_x\simeq \left(N_{\B_x/\hat{B}_x}\right) _z,
\end{equation}
where $\hat{\B}_x$ denotes the topological algebra attached to the commutative chiral algebra $\B$ and $\left(N_{\B_x/\hat{B}_x}\right) _z$ denotes the normal bundle at $z$ to Spec($\B_x$) inside Spec($\hat{\B}_x$). \\
In particular, if we take $\B=\ZL$, and take the fiber at $x$ of the map $\omega$, we obtain a map
\[
\omega:\Omega(\ZL)_{x}\simeq \Omega(\zl)\rightarrow N_{\zl/\ZL}.
\]
Under the isomorphism with the space of opers given by Theorem \ref{th10},  and the isomorphism from Proposition \ref{p8}, this map corresponds to the injective map from \ref{sub3}.

\section{Lie$^*$-algebroids and chiral algebroids arising from $\ZL$}\label{3}
 In this section we will assume the reader is familiar with the notion of Lie$^*$-algebroids, chiral extensions of Lie$^*$-algebroids and chiral envelope of such, as introduced in \cite{BD}.\\ Recall the functor $\Gamma^{\lambda}$, regarded as a functor 
 \[
 \Gamma^{\lambda}:\DM^{G[[t]]}\rightarrow \gm_{reg,\tau(\lambda)}.
 \]
 To show the exactness of the above functor, we need to study in more details the category on the left, which involves some constructions from \cite{FG} that we will now recall.
 \subsection{} \label{alg1}
Recall the Lie$^*$-algebroids $\AF$ and the chiral algebroid  $\AREN$ introduced in \cite{FG} Sect. 4.\\
Recall That $\AF$ is a Lie$^*$-algebrois over $\ZZ$ which fits into the exact sequence
\[
0\rightarrow \AC/\ZZ\rightarrow \AF\rightarrow \OO\rightarrow 0,
\]
and it is constructed using the Poisson bracket on the center $\ZZ$ together with the chiral algebras $\AH$ introduced in \ref{ah}. Recall, in fact, that If we denote by $\AH^{\#}$
\[
\AH^{\#}=\left\{  \frac{a}{\hbar}| \, a \in \AH,\, a \pmod \hbar \in \ZZ \right\},
\]
then the Lie$^*$-action of $\AH^{\#}/\ZZ$ on $\ZZ$  via the projection $\AH^{\#}\rightarrow \ZZ$ followed by the poisson bracket on $\ZZ$,  gives the tensor product $\ZZ\otimes \AH^{\#}/\ZZ$ a Lie$^*$-algebroid structure over $\ZZ$. The algebroid $\AF$ is defined in \cite{FG} as a certain quotient of this tensor product.\\
\newcommand{\AFm}{\AF\text{-mod}}

\noindent Recall the category $\AFm$ of $\AF$-modules supported at $x\in X$. It consists of $\M\in \AC\text{-mod}$, with an additional action of the Lie$^*$-algebroid $\AF$ (see \cite{BD} sect. 2.5.16) such that:
\begin{enumerate}
\item As a chiral module over $\ZZ\hookrightarrow \AC$ it is central.\\
\item The two induced Lie$^*$-actions of $\AC/\ZZ$ coincide.\\
\item The chiral action of $\AC$ and the Lie$^*$-action of $\AF$ on $\M$ are compatible with the Lie$^*$-action of $\AF$ on $\AC$. 

\end{enumerate}
\subsection{}Recall now the chiral algebroid $\AREN$. This is a chiral alegbroid (as defined in \cite{BD} sect. 3.9.) fitting into
\[
0\rightarrow \AC\underset{\ZZ}{\otimes}\AC\rightarrow \AREN\rightarrow \OO\rightarrow 0,
\]
constructed using the action on $\ZZ$ given by the Poisson bracket, of  a certain Lie$^*$-algebra mapping to it. \\
\newcommand{\ARENm}{\AREN\text{-mod}^{ch}}

\noindent Recall that the category $\ARENm$ of chiral modules supported at $x$, consists of $\AC\underset{\ZZ}{\otimes}\AC$-modules, with an additional chiral action of $\AREN$ such that the former action coincides with the latter when restricted to $\AC\underset{\ZZ}{\otimes}\AC\subset \AREN$ . Moreover, following \cite{BD} Sect 3.9.24. $\ARENm$ is equivalent to the category of chiral modules over a chiral algebra $U(\AREN)$ called the \emph{chiral envelope} of $\AREN$. \\
\indent According to \cite{FG} Theorem 5.4. the chiral algebra $U(\AREN)$ is isomorphic to the $0$-th part of the sheaf $\DG$ of chiral  differential operators on the loop group $G(\!(t)\!)$ (see the introduction for the definition of the latter).\\
More precisely, as a bimodule over $\gc$, the fiber at any point $x\in X$ of $\DG$ is $G[[t]]$-integrable with respect to both actions, and we have 
two direct sum decompositions of $\left(\mathcal{D}_{crit}\right)_x$ corresponding to the left and right action of $\gc$. These decompositions
coincide up to the involution $\tau$ and we have
\[
\left( \mathcal{D}_{crit}\right)_x=\underset{\lambda \,dominant}{\bigoplus}\left(\mathcal{D}_{crit}\right)_x^{\lambda},
\]
where $\left(\mathcal{D}_{crit}\right)_x^{\lambda}$ is the direct summand supported on the formal completion of 
$ \text{Spec}(\mathfrak{z}_{crit}^{\lambda})$.\\
The $0$-th part of the sheaf $\DG$, is defined as the chiral algebra corresponding to  $\left(\mathcal{D}_{crit}\right)_x^{0}$.
We will denote such a chiral algebra by $\DD$. Hence we have 
\begin{equation}\label{eq6}
\ARENm\simeq \DD\text{mod}.
\end{equation}

\subsection{}\label{ext}Recall now the Lie$^*$-agebroid $\AFT:=\AREN/\ZZ$. An $\AFT$-module $\M$ supported at the point $x$ is a $\AC\otimes\AC$-module 
with an additional action of the Lie$^*$-algebroid $\AFT$ such that:
\begin{enumerate}
\item As a chiral module over $\ZZ\hookrightarrow \AC$ it is central.\\
\item The two induced Lie$^*$-actions of $\AC\underset{\ZZ}{\otimes}\AC$ coincide.\\
\item The chiral action of $\AC\otimes \AC$ and the Lie$^*$-action of $\AFT$ on $\M$ are compatible with the Lie$^*$-action of $\AFT$ on $\AC$. 
\end{enumerate}

\subsection{}\label{algebroids} Using the chiral algebra $\ACL$ given by proposition \ref{p3} and its center $\ZL$ with the Poisson structure explained in \ref{ah}, we can construct a Lie$^*$-algebroid $\AFL$,  a chiral algebroid $\ARENL$, and define the Lie$^*$-algebroid $\AFTL$ to be $\ARENL/\ZL$. The construction is a word by word repetition of what it is done in \cite{FG}, and we will omit it. The first two objects $\AFL$ and $\AFTL$ fit into the following exact sequences
\[
0\rightarrow \ACL/\ZL\rightarrow \AFL\rightarrow \Omega(\ZL)\rightarrow 0,
\]
\[
0\rightarrow \ACL\underset{\ZL}{\otimes}\ACL\rightarrow \ARENL\rightarrow \Omega(\ZL)\rightarrow 0
\]
respectively. As before we define the appropriate category of modules over them in the following way.\\
\newcommand{\AFLm}{\AFL\text{-mod}}
The category $\AFLm$ of modules supported at $x\in X$ consists of $\M\in \ACL\text{-mod}$, with an additional action of the Lie$^*$-algebroid $\AFL$ such that:
\begin{enumerate}
\item As a chiral module over $\ZL\hookrightarrow \ACL$ it is central.\\
\item The two induced Lie$^*$-actions of $\ACL/\ZL$ coincide.\\
\item The chiral action of $\ACL$ and the Lie$^*$-action of $\AFL$ on $\M$ are compatible with the Lie$^*$-action of $\AFL$ on $\ACL$. 
\end{enumerate}
\newcommand{\ARENLm}{\ARENL\text{-mod}^{ch}}

The category $\ARENLm$ of chiral modules supported at $x$ consists of $\ACL\underset{\ZL}{\otimes}\ACL$-modules, with an additional chiral action of $\ARENL$ such that the former action coincides with the latter when restricted to $\ACL\underset{\ZL}{\otimes}\ACL\subset \ARENL$ . 

The category $\AFTL$-mod of modules supported at $x$ consists of a $\M\in \ACL\otimes\ACL$-module 
with an additional action of the Lie$^*$-algebroid $\AFTL$ such that:
\begin{enumerate}
\item As a chiral module over $\ZL\hookrightarrow \ACL$ it is central.\\
\item The two induced Lie$^*$-actions of $\ACL\underset{\ZL}{\otimes}\ACL$ coincide.\\
\item The chiral action of $\ACL\otimes \ACL$ and the Lie$^*$-action of $\AFTL$ on $\M$ are compatible with the Lie$^*$-action of $\AFTL$ on $\ACL$. 
\end{enumerate}

\begin{remark}
Note that a central $\ZZ$-module $\M$ supported at the point $x$, is the same as a $\ZC$-module on which the action of $\ZC$ factors through $\zz$. Similarly, central $\ZL$-modules are the same as $\ZC$-modules on which the action factors through $\zl$. 
\end{remark}

\subsection{The unramified center}
Recall the commutative chiral algebra $\ZZ$, its modification $\ZL$, and the topological associative algebra $\ZC$ attached to them, for every $x\in X$. Recall moreover the commutatie algebras $\zz$ and $\zl$, and the ideals $I_0$ and $I_{\lambda}$ given by the kernel of the projections $\ZC\twoheadrightarrow \zz$ and $\ZZ\twoheadrightarrow \zl$ respectively.\\
\newcommand{\zltt}{\tilde{\mathfrak{z}}_{reg,\lambda}}
Denote by $\zltt$ the formal completion of $\ZC$ with respect to the ideal $I_{\lambda}$, and by Spec$(\ZU$) the subscheme of Spec$(\ZC)$ corresponding to $\OPDU$, i.e. oper that are monodromy free as local systems, under the isomorphism from Theorem \ref{th3a}. We clearly have:
\[
\bigsqcup_{\lambda}\text{Spec}(\zl)\subset \text{Spec}(\ZU)\subset \bigsqcup_{\lambda}\text{Spec}(\zltt).
\]

Consider the functor $\il:\ZC\text{-mod}\rightarrow \zl\text{-mod}$ that takes a $\ZC$-module to its maximal submodule \emph{
scheme-theoretically} supported on Spec$(\zl)$, i.e. for any module $\M$, $\il(\M)$ consists of sections that are 
annihilated by $I_{\lambda}=\text{Ker}(\ZC\rightarrow \zl)$.\\
\newcommand{\iltt}{\tilde{\imath}^{!}_{\lambda}}
Similarly, let us define $\iltt$: $\ZC$-mod$\rightarrow $$\zltt$-mod as the functor that 
 assigns to a $\ZC$-module its maximal 
submodule \emph{set-theoretically} supported on Spec($\zl$) (i.e. supported on Spec($\zltt$)\!). In other words $\iltt(\M)$ admits a filtration whose sub-quotients are annihilated by $I_{\lambda}$.\\

Let $\gm_{reg, \lambda}$ be the subcategory of $\gm$ of modules on which the action of $\ZC$ factors through $\zl$, and let $\gm_{\widetilde{reg},\lambda}$ be the subcategory of $\gm$ consisting of modules that, when regarded as $\ZC$-modules, are set-theoretically supported on $\zl$. 
Denote in the same way the corresponding functors
\[
 \gm \xrightarrow{\iltt} \gm_{\widetilde{reg},\lambda}\subset\zltt\text{mod},\,\,\,\,\,\,\,
\gm^{G[[t]]}\xrightarrow{\iltt} \gm_{\widetilde{reg},\lambda}^{G[[t]]}\subset \zltt\text{mod}.
\]
It is well known that if a $\gc$-module $M$ is $G[[t]]$ integrable, i.e. if $M\in \gm^{G[[t]]}$, then it is supported on the disjoint union of some Spec$(\zltt)$'s. However, something more can be said. In fact we have the following proposition proved in \cite{FG3} Theorem 1.10.
\begin{prop}\label{p4}
The support in Spec($\ZC$) of a $G[[t]]$-integrable module $M$ is contained in Spec$(\ZU)$.
\end{prop}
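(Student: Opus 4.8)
The plan is to reduce the statement to the already-understood support of the Weyl modules $\VAC$, using $G[[t]]$-integrability to express $M$ in terms of induced modules. Since the action of $G[[t]]$ on $M$ is algebraic, $M$ is the directed union of its finite-dimensional $G[[t]]$-stable subspaces $W_{\alpha}$, and hence $M=\varinjlim_{\alpha}(\gc\cdot W_{\alpha})$ is the filtered colimit of the $\gc$-submodules they generate. Because the support of a filtered colimit is the closure of the union of the supports of its terms, it suffices to bound $\operatorname{supp}(\gc\cdot W_{\alpha})$ for each $\alpha$. Each $W_{\alpha}$ is in particular a module over $\g[[t]]\oplus\mathbbm{C}\mathbbm{1}$ (with $\g[[t]]=\operatorname{Lie}(G[[t]])$ acting by restriction and $\mathbbm{1}$ by the identity), so Frobenius reciprocity gives a surjection $\Ind_{\g[[t]]\oplus\mathbbm{C}\mathbbm{1}}^{\gc}W_{\alpha}\twoheadrightarrow \gc\cdot W_{\alpha}$, whence $\operatorname{supp}(\gc\cdot W_{\alpha})\subset\operatorname{supp}\bigl(\Ind_{\g[[t]]\oplus\mathbbm{C}\mathbbm{1}}^{\gc}W_{\alpha}\bigr)$.

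Next I would analyze the induced module. The congruence subgroup $G_{1}=\ker(G[[t]]\to G)$ is pro-unipotent, so $t\g[[t]]$ acts nilpotently on the finite-dimensional space $W_{\alpha}$. Choosing a $\g[[t]]$-stable filtration of $W_{\alpha}$ on whose successive subquotients $t\g[[t]]$ acts by zero, each subquotient is pulled back from a finite-dimensional $\g$-module, hence, $\g$ being semisimple, a direct sum of irreducibles $V^{\lambda}$. Since induction from $\g[[t]]\oplus\mathbbm{C}\mathbbm{1}$ is exact (PBW) and commutes with direct sums, $\Ind_{\g[[t]]\oplus\mathbbm{C}\mathbbm{1}}^{\gc}W_{\alpha}$ inherits a finite filtration whose subquotients are direct sums of Weyl modules $\VAC$. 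As the set-theoretic support of a module equals the union of the supports of the subquotients of any finite filtration, we get $\operatorname{supp}\bigl(\Ind_{\g[[t]]\oplus\mathbbm{C}\mathbbm{1}}^{\gc}W_{\alpha}\bigr)\subset\bigcup_{\lambda}\operatorname{supp}(\VAC)$.

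Finally I would identify $\operatorname{supp}(\VAC)$. The center $\ZC$ acts on $\VAC$ through its endomorphism algebra $\End(\VAC)=\zl$, i.e. through the surjection $\ZC\twoheadrightarrow\zl$, so $\operatorname{supp}(\VAC)=\operatorname{Spec}(\zl)$. By Theorem \ref{th10} this is $\operatorname{Spec}(\zl)\simeq\OPDL$, and by the inclusion recorded in \ref{sub3} we have $\OPDL\subset\OPDU$, so $\operatorname{Spec}(\zl)\subset\operatorname{Spec}(\ZU)$ — this is precisely the containment $\bigsqcup_{\lambda}\operatorname{Spec}(\zl)\subset\operatorname{Spec}(\ZU)$ noted above. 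Combining the three steps and using that $\operatorname{Spec}(\ZU)$ is closed (so that taking the closure of the union in the colimit step stays inside it) yields $\operatorname{supp}(M)\subset\operatorname{Spec}(\ZU)$.

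The main obstacle is not the homological bookkeeping above but the geometric input it rests on: the identification $\End(\VAC)=\zl\simeq\OPDL$ together with the fact that $\lambda$-regular opers are monodromy-free, i.e. $\OPDL\subset\OPDU$. This is the heart of the ``opers without monodromy'' phenomenon and is exactly what forces the Weyl-module supports into the unramified locus. A secondary point requiring care is that this argument controls only the \emph{set-theoretic} support — which is all that Proposition \ref{p4} asserts — so everything must be phrased via supports (zero loci of annihilator ideals), since the nilpotent action of $t\g[[t]]$ and the passage through filtrations do not respect scheme-theoretic support.
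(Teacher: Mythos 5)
Your argument is correct as far as it goes, but it proves only the \emph{set-theoretic} containment, and the final remark of your proposal --- that set-theoretic support ``is all that Proposition \ref{p4} asserts'' --- misreads the statement. The paper introduces three nested subschemes, $\bigsqcup_{\lambda}\text{Spec}(\zl)\subset \text{Spec}(\ZU)\subset \bigsqcup_{\lambda}\text{Spec}(\tilde{\mathfrak{z}}_{reg,\lambda})$, which all have the same underlying set of points; the sentence immediately preceding the proposition says that containment of the support in the formal completions $\text{Spec}(\tilde{\mathfrak{z}}_{reg,\lambda})$ is ``well known'' and that ``something more can be said.'' That something more is precisely the scheme-theoretic refinement: the action of $\ZC$ on a $G[[t]]$-integrable module factors through $\text{Fun}(\OPDU)$, i.e.\ the annihilator of $M$ contains the ideal of $\text{Spec}(\ZU)$, not merely some power of $I_\lambda$. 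This refinement is what the paper actually uses later: the functor $(\il)_{unr}$ must land in $\ZU$-modules so that the Kashiwara-type Proposition \ref{kash 1} can be applied with $\sY'=\OPDU$, where the surjectivity of $\mL_x\rightarrow N_{\sY/\sY'}$ comes from Proposition \ref{p8}; with $\sY'$ replaced by the full formal neighborhood the algebroid would not surject onto the normal bundle and the argument would collapse.

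The precise step where your proof cannot be upgraded is the filtration step. A module with a finite filtration whose subquotients are scheme-theoretically supported on $\text{Spec}(\zl)$ need not itself be scheme-theoretically supported there (compare $\bC[x]/(x^2)$ filtered by two copies of $\bC[x]/(x)$): the extension classes can smear the support into the formal neighborhood, and a priori into directions transverse to $\text{Spec}(\ZU)$. The actual content of the proposition is that the extensions arising inside $\gm^{G[[t]]}$ move the support of the Weyl modules only along $N_{\zl/\ZU}\simeq\Omega(\zl)$, i.e.\ stay inside the unramified locus; this is a nontrivial statement about self-extensions of the $\VAC$ which your reduction to $\Ind_{\g[[t]]\oplus\bC}^{\gc}W_\alpha$ does not see. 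Note also that the paper itself does not prove this proposition --- it quotes it from \cite{FG3}, Theorem 1.10, where the proof requires exactly this extra geometric input --- so there is no internal argument of the paper that your proposal could be matching.
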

In other words, the above proposition tells us that the image of the functor 
\[
\gm^{G[[t]]}\xrightarrow{\iltt} \gm_{\widetilde{reg},\lambda}^{G[[t]]},
\]
which is a priori contained in $\zltt$-mod, belongs to $\ZU$-mod. We will denote by $\gm_{unr,\lambda}$ the category consisting of $\gc$-modules, whose support in contained in the $\lambda$ part of $\ZU$.\\

Consider now the functor $\il$ as a functor 
\[
\il:\gm\rightarrow \gm_{reg,\lambda}.
\]
Clearly we have $\il=\il\circ\iltt$. Moreover we have the following:
\begin{prop}\label{p1}
 The functor $\iltt:\gm^{G[[t]]}\rightarrow \gm_{unr,\lambda}^{G[[t]]}$ is exact.
\end{prop}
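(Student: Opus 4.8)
The plan is to show that, once we restrict to $G[[t]]$-integrable modules, the functor $\iltt$ is nothing but the projection onto a direct summand cut out by a clopen piece of the support, and that such a projection is automatically exact. Recall first that $\iltt(M)$ is the maximal submodule of $M$ that is set-theoretically supported on $\mathrm{Spec}(\zl)$, i.e. $\iltt(M)=\{m\in M: I_{\lambda}^{n}m=0 \text{ for } n\gg 0\}$. Written this way it is a filtered colimit of the left exact functors $\Hom_{\ZC}(\ZC/I_{\lambda}^{n},-)$, so $\iltt$ is left exact for formal reasons. Hence the whole content of the proposition is the right exactness, which I would deduce from an honest direct-sum splitting of $M$.

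The key geometric input is the disjoint decomposition of the unramified locus. By Proposition \ref{p4}, every $M\in\gm^{G[[t]]}$ has support contained in $\mathrm{Spec}(\ZU)$. On the other hand, the chain of inclusions displayed above exhibits $\mathrm{Spec}(\ZU)$ as lying inside a disjoint union, over dominant weights, of the formal completions $\mathrm{Spec}(\zltt)$; since these completions are pairwise disjoint, $\mathrm{Spec}(\ZU)$ inherits a clopen decomposition, one component of which -- call it $Z_{\lambda}$ -- contains $\mathrm{Spec}(\zl)$. Consequently $\mathrm{supp}(M)$ splits as the disjoint union of the two closed subsets $\mathrm{supp}(M)\cap Z_{\lambda}$ and its complement inside $\mathrm{supp}(M)$.

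Finally I would upgrade this decomposition of supports to a decomposition of modules and read off exactness. Since $\ZC$ is central in $\UG$ and its action commutes with that of $G[[t]]$, the idempotent separating the clopen component $Z_{\lambda}$ from the rest acts on $M$ through the completed central action and produces a $\gc$-stable and $G[[t]]$-stable splitting $M=M_{\lambda}\oplus M'$, where $M_{\lambda}$ collects the sections supported on $Z_{\lambda}$ and $M'$ those supported on the complementary components. By construction $M_{\lambda}$ is exactly the maximal submodule set-theoretically supported on $\mathrm{Spec}(\zl)$, so $\iltt(M)=M_{\lambda}$, and it lies in $\gm_{unr,\lambda}^{G[[t]]}$; as $M\mapsto M_{\lambda}$ is projection onto a functorial direct summand, it is exact. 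The main obstacle is precisely this last upgrade: passing from the set-theoretic disjointness of supports to a genuine direct-sum decomposition of the continuous $\ZC$-module $M$, which forces one to check that the clopen idempotent really lifts to the completed algebra acting on $M$ and splits off a $\gc$-submodule complement. This is where $G[[t]]$-integrability is indispensable: for a general module whose support meets the complement of $\mathrm{Spec}(\ZU)$, the completions $\mathrm{Spec}(\zltt)$ are no longer separated inside the full oper space $\mathrm{Spec}(\ZC)=\OPPD$, the pieces can touch, and $\iltt$ degenerates into a genuine local-cohomology functor that is only left exact. Proposition \ref{p4} is exactly what confines the support to the locus where the decomposition is clopen.
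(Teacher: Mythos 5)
Your argument is essentially the paper's own proof: the paper simply invokes Lemma \ref{l1} (disjointness of the $\mathrm{Spec}(\zl)$'s) together with Proposition \ref{p4} (confinement of the support of a $G[[t]]$-integrable module to $\mathrm{Spec}(\ZU)$), which is exactly the clopen-decomposition-plus-projection mechanism you spell out. You have merely supplied the details (left exactness for formal reasons, lifting the idempotent to a $\gc$-stable splitting) that the paper leaves implicit.
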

\begin{proof}
 The proof follows from the following Lemma.
\end{proof}
\begin{lemma}\label{l1}
 For any two positive weights $\lambda$ and $\mu$, we have Spec$(\zl)\cap$ Spec($\mathfrak{z}_{crit,\mu}$)$=\emptyset$ 
\end{lemma}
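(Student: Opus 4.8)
The statement is understood for $\lambda\neq\mu$ (both dominant), the case $\lambda=\mu$ being vacuous. The plan is to transport the question, via Theorem \ref{th10} and Theorem \ref{th3a}, to the space of opers, and to separate $\OPDL$ from $\OPDM$ by computing the residue of the corresponding regular-singular opers. First I would record that, by Theorem \ref{th10}, $\text{Spec}(\zl)\simeq\OPDL$ and $\text{Spec}(\mathfrak{z}_{crit,\mu})\simeq\OPDM$, both realized as closed subschemes of $\text{Spec}(\ZC)\simeq\OPPD$ through the surjections $\ZC\twoheadrightarrow\zl$ and $\ZC\twoheadrightarrow\mathfrak{z}_{crit,\mu}$, and that both lie inside $\OPDU$ by \ref{sub3}. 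It therefore suffices to show that $\OPDL$ and $\OPDM$ are disjoint as subschemes of $\OPPD$.

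The crux is to compute the residue of a $\lambda$-regular oper. Starting from the representative (\ref{eq4}),
\[
\nabla=\nabla_0+\Big(\sum_i t^{\langle\alpha_i,\lambda\rangle}f_i\Big)dt+v(t)dt,\qquad v(t)\in\check{\mathfrak{b}}[[t]],
\]
I would apply the gauge transformation by $t^{x}$, where $x\in\check{\mathfrak{h}}$ is the unique element with $\alpha_i(x)=\langle\alpha_i,\lambda\rangle+1=\langle\alpha_i,\lambda+\check{\rho}\rangle$ for every simple root $\alpha_i$; that is, $x=\lambda+\check{\rho}$. Under this conjugation each $t^{\langle\alpha_i,\lambda\rangle}f_i$ becomes $t^{-1}f_i$, the Cartan term $-x\,t^{-1}$ is produced by $t^{x}\nabla_0 t^{-x}$, and $t^{x}v(t)t^{-x}$ remains regular because conjugation multiplies each positive root vector by $t^{\langle\beta,\lambda+\check{\rho}\rangle}$ with $\langle\beta,\lambda+\check{\rho}\rangle>0$ and fixes the Cartan part. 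Hence $\nabla$ acquires a first-order pole with residue $p_{-1}-x$, whose image under the residue map $\varpi$ to $\check{\mathfrak{h}}/W$ is the $W$-orbit of $-(\lambda+\check{\rho})$. In particular this orbit does not depend on $v(t)$, so the residue is constant, equal to $[-(\lambda+\check{\rho})]$, on all of $\OPDL$ (and, \emph{mutatis mutandis}, equal to $[-(\mu+\check{\rho})]$ on $\OPDM$). This is the standard description of opers with regular singularities, and I would cite \cite{BD2} and \cite{FG3} for the residue map.

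Finally I would conclude by separating the two residue points. Since $\lambda,\mu$ are dominant and distinct, $\lambda+\check{\rho}$ and $\mu+\check{\rho}$ are distinct strictly dominant weights; as the strictly dominant chamber is a fundamental domain for the $W$-action, they lie in different $W$-orbits, and hence so do $-(\lambda+\check{\rho})$ and $-(\mu+\check{\rho})$. Therefore $\OPDL$ and $\OPDM$ map under $\varpi$ to disjoint points of $\check{\mathfrak{h}}/W$, so they are disjoint subschemes of $\OPDU\subset\OPPD$; transporting back through Theorem \ref{th10} yields $\text{Spec}(\zl)\cap\text{Spec}(\mathfrak{z}_{crit,\mu})=\emptyset$.

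The main obstacle is the residue computation of the second paragraph: making precise the residue map on $\OPDU$ and checking that the $\check{N}(\!(t)\!)$-gauge class in (\ref{eq4}) has the asserted \emph{constant} residue $[-(\lambda+\check{\rho})]$, independent of the regular part $v(t)$. Once that normalization is fixed, the separation in the final paragraph is elementary representation theory.
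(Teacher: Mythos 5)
Your proposal is correct and follows essentially the same route as the paper: both reduce the statement, via the identification $\text{Spec}(\zl)\simeq\OPDL$, to the disjointness of $\OPDL$ and $\OPDM$ inside $\OPPD$. The paper simply asserts that this disjointness "is clear from the description" of $\lambda$-regular opers, whereas you supply the justification the paper leaves implicit — the gauge transformation by $t^{\lambda+\check{\rho}}$ showing the residue is the $W$-orbit of $-(\lambda+\check{\rho})$, which separates distinct dominant weights.
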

\begin{proof}
Recall from \cite{FG6} that, for any dominant weight $\lambda$, we have Spec$(\zl)\simeq \OPDL$. Moreover, it is clear from the description of the latter, that for $\lambda$ and $\mu$ dominant weights, we have $\OPDL\cap\OPDM=\emptyset$. 

\end{proof}

\section{Proof of the Main Theorem}\label{4}
Recall now the functor  $\Gamma^{\lambda}:\DM\rightarrow \gm_{reg,\tau(\lambda)}, $
defined as the composition
\begin{equation}\label{eq7}
  \DM\simeq \DGM\xrightarrow{For} (\gc\times \gc)\text{-mod}^{G[[t]]}\xrightarrow{\Hom(\VAC,\,\,)}\gm_{reg,\tau(\lambda)}.
\end{equation}
To show the exactness of the above functor,  we will compose $\Gamma^{\lambda}$ with the forgetful functor to $Vect$, and 
show that the composition
\[
 \Gamma^{\lambda}:D_{crit}(\GR)\text{-mod}\rightarrow Vect
\]
is exact.

\subsection{}
By formula (\ref{eq7}), the functor of global sections $\Gamma^{\lambda}:\DM\rightarrow Vect$ can be viewed as a functor
$\DGM\rightarrow Vect$ given by $\M\rightarrow \Hom_{\gc}(\VAC,\M)$. 
Recall that the module $\VAC$ is supported on Spec$(\zl)$, and moreover, according to \cite{BD2} Sect 8. is a projective generator for the category $\gm_{reg,\lambda}$.\\   The fact that the support of $\VAC$ is Spec$(\zl)$, makes the functor $\Gamma^{\lambda} $ factor as
\[
\begin{array}{ccccccc}
\DGM&\xrightarrow{For}&\gm^{G[[t]]}&\xrightarrow{\il}&\gm_{reg,\lambda}^{G[[t]]}&\longrightarrow&Vect.\\
\M&\longrightarrow&\M&\longrightarrow&\il(\M)&\longrightarrow&\Hom_{\gm_{reg,\lambda}}(\VAC,\il(\M)\!)
\end{array}
\]
\noindent However, because of the projectivity result just recalled,  the last functor is exact. Therefore, 
Theorem \ref{th1} is equivalent to the following:
\begin{thm}\label{th2}
 The composition
\[
 \DGM\rightarrow \gm^{G[[t]]}\xrightarrow{\il} \gm_{reg,\lambda}^{G[[t]]}
\]
is exact.
\end{thm}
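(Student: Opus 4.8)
The plan is to factor $\il$ through its set-theoretic counterpart $\iltt$, reduce to the modules $M_\mF$ carrying the chiral algebroid action of $\ARENL$, and then invoke the chiral Kashiwara lemma, whose transversality hypothesis is furnished by Proposition \ref{p8}. Note first that $\il$ is automatically left exact, since $\il(M)=\{m\in M:I_\lambda m=0\}$ is the kernel of the $I_\lambda$-action; the whole content is that $\il$ preserves surjections. Because $\il=\il\circ\iltt$ and $\iltt\colon\gm^{G[[t]]}\to\gm_{unr,\lambda}^{G[[t]]}$ is exact by Proposition \ref{p1}, it suffices to prove exactness of $\il$ on the modules $M_\mF$, for $\mF\in\DM$, regarded inside $\gm_{unr,\lambda}^{G[[t]]}$. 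By Proposition \ref{p4} each such $M_\mF$ is, as a left $\gc$-module, set-theoretically supported on $\mathrm{Spec}(\ZU)$; since $\VAC$ is scheme-theoretically supported on $\mathrm{Spec}(\zl)\simeq\OPDL$, while the summands attached to $\OPDM$ with $\mu\neq\lambda$ are disjoint from it by Lemma \ref{l1}, we may replace $M_\mF$ by the summand set-theoretically supported on $\OPDL\subset\OPDU$.

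I would then equip this summand with the relevant algebroid action, and here the construction of \emph{modification at a point} is essential. Applying Proposition \ref{p3} with $\A=\AC$ and $M=\VAC$ produces the chiral algebra $\ACL$, its center $\ZL$, and---following the recipe of \ref{algebroids}---the chiral algebroid $\ARENL$ together with the Lie$^*$-algebroid $\AFTL=\ARENL/\ZL$, all agreeing with their critical counterparts on $X-x$. As $M_\mF$ is a $\DG$-module and $\AREN|_{X-x}\simeq\ARENL|_{X-x}$, the constructions of Section \ref{3} equip the summand above with the structure of a chiral $\ARENL$-module, in particular of a module over the Lie$^*$-algebroid $\OL=\Omega(\ZL)$ through the anchor $\omega\colon\Omega(\ZL)\to T(\ZL)$. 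This is precisely the gain of replacing $\ZZ$ by $\ZL$: over $\ZZ$ the module is \emph{not} central, its support $\OPDL$ being disjoint from $\OPD$ by Lemma \ref{l1}, whereas after modification the fiber of the center is realigned to $\zl\simeq\mathrm{Fun}(\OPDL)$ and the conditions of \ref{algebroids} are met, so the Kashiwara formalism becomes available.

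The transversality needed to run that formalism is exactly Proposition \ref{p8}. Passing to the fiber at $x$ of the anchor and combining \ref{alg} with Proposition \ref{p8}, one identifies
\[
\omega_x\colon\Omega(\zl)\simeq\Omega(\OPDL)\hookrightarrow N_{\OPDL/\OPPD},\qquad \operatorname{im}(\omega_x)=N_{\OPDL/\OPDU},
\]
so that the normal directions to $\OPDL$ inside $\OPDU$ are precisely the Hamiltonian directions coming from $\Omega(\ZL)$, and on $M_\mF$ these are implemented by the chiral $\ARENL$-action. This is the hypothesis of the chiral Kashiwara lemma, Proposition \ref{kash 1}, which asserts that for a chiral $\ARENL$-module set-theoretically supported on $\OPDL$ the functor $\il$ of passage to the $I_\lambda$-annihilated submodule is exact. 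Granting Proposition \ref{kash 1}, we obtain the missing surjectivity $\il(M)\twoheadrightarrow\il(M'')$ for every short exact sequence $0\to M'\to M\to M''\to0$ of such modules, which is Theorem \ref{th2}.

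The hard part is Proposition \ref{kash 1} itself, established in Section \ref{5}; the argument above only arranges its hypotheses. The point distinguishing the present situation from \cite{FG} is that the modules $M_\mF$ are genuinely non-central over the unmodified center $\ZZ$---so the plain Kashiwara argument does not apply on the nose---and it is the modification at a point of Proposition \ref{p3} that restores the setup over $\ZL$ and matches the transversal action with the injective anchor of Proposition \ref{p8}.
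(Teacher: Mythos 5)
Your proposal is correct and follows essentially the same route as the paper: factor $\il$ as $(\il)_{unr}\circ\iltt$, get exactness of $\iltt$ from Proposition \ref{p1} (via Lemma \ref{l1} and Proposition \ref{p4}), transfer the problem to chiral $\ARENL$-modules via the modification at a point, and conclude by the chiral Kashiwara lemma (Proposition \ref{kash 1}), whose transversality hypothesis is Proposition \ref{p8}. The only cosmetic difference is that the paper packages the second step as the equivalence of categories of Proposition \ref{p2}, of which the exactness you invoke is the immediate consequence.
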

\newcommand{\DDm}{\DD\text{-mod}}

\subsection{}\label{red}
Recall the categories $\ARENm\simeq \DD$-mod and $\AFTL$-mod introduced in \ref{alg1}-\ref{algebroids}.\\ Denote by $\DDm_{reg,\lambda}$, $\DDm_{\widetilde{reg},\lambda}$ and $\DDm_{unr,\lambda}$ the pre-image of $\gm_{reg,\lambda}$, $\gm_{\widetilde{reg},\lambda}$ and $\gm_{unr,\lambda}$ under the forgetful functor $$For:\DD\text{-mod}\simeq \ARENm\rightarrow \gc\text{-mod}$$
respectively. 
Recall that a module $\M$ in $\AFTL$-mod is, in particular, a central $\ZL$-modules, and that this is equivalent to the fact that the underlying vector space $M$, viewed as a $\ZC$-module, is supported on Spec$(\zl)$, being $\zl$ the fiber of $\ZL$ at $x$. \\
Note that, moreover,  we have an inclusion
\[
\DDm_{reg,\lambda}\subset \AFTL\text{-mod}.
\]
In fact, a module $\M$ in $\DDm_{reg,\lambda}$ can be seen as a module for $\ARENL$ with a central action of $\ZL$. In other words it receives a Lie$^*$-action of $\ARENL$ that factors through $\ARENL/\ZL=\AFTL$.\\
Consider the functors $\il:\ZC\text{-mod}\rightarrow \zl\text{-mod}$ and $\iltt:\ZC\text{mod}\rightarrow\zltt\text{-mod}$. Denote by the same letters the functors 
\[
\DD\text{-mod}\simeq\ARENm\xrightarrow{\iltt}\ARENm_{\widetilde{reg},\lambda}\,\,\,\text{and}\,\,\, \DD\text{-mod}\xrightarrow{\il} \DDm_{reg,\lambda}\subset \AFTL\text{-mod},
\]
Consider the $G[[t]]$-equivariant categories and the corresponding functors
\[
\DD\text{-mod}^{G[[t]]}\xrightarrow{\iltt}\DD\text{-mod}^{G[[t]]}_{\widetilde{reg},\lambda}\,\,\,\text{and}\,\,\, \DD\text{-mod}^{G[[t]]}\xrightarrow{\il} \AFTL\text{-mod}.
\]
By Proposition \ref{p1}, the first functor is exact. Moreover, by Proposition \ref{p4}, its image is contained in $\DDm_{unr,\lambda}^{G[[t]]}$. Therefore the functor $\il:\DDm^{G[[t]]}\ra\AFTL\text{-mod}^{G[[t]]}$  is the composition of the 
following two functors
\[
\iltt:\DDm^{G[[t]]}\rightarrow \DDm_{unr,\lambda}^{G[[t]]},
\]
\[
(\il)_{unr}: \DDm_{unr,\lambda}^{G[[t]]}\rightarrow \AFTL\text{-mod}^{G[[t]]}.
\]
Here $(\il)_{unr}$ is the restriction of $\il$ to $\DDm_{unr,\lambda}^{G[[t]]}\subset \DDm^{G[[t]]}$.
\subsection{Proof of Theorem \ref{th2}}
We can now proceed to the proof of the Theorem. Recall that we want to show that the composition
\[
 \DGM\rightarrow \gm^{G[[t]]}\xrightarrow{\il} \gm_{reg,\lambda}^{G[[t]]}
\]
is exact. This follows from the following proposition:
\begin{prop}\label{p2}
 The functor \[
 (\il)_{unr}:\DDm_{unr,\lambda}\overset{\sim}{\rightarrow} \AFTL\text{-mod}
\]
\noindent is an equivalence of category.
\end{prop}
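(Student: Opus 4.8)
The plan is to exhibit $(\il)_{unr}$ as an instance of the chiral Kashiwara lemma of Proposition \ref{kash 1}, with $\il$ in the role of ``scheme-theoretic sections along $\text{Spec}(\zl)$'' and a quasi-inverse provided by chiral induction along the algebroid $\ARENL$. The starting observation is that the two categories in the statement live over chiral algebroids that agree away from $x$: by construction $\AC|_{X-x}\simeq\ACL|_{X-x}$, hence $\AREN|_{X-x}\simeq\ARENL|_{X-x}$, and since modules supported at $x$ depend only on the topological algebra attached on the punctured disc (this is exactly what underlies $\hat{\mathfrak{Z}}_{crit,\lambda}\simeq\ZC$), the equivalence $\ARENm\simeq\DD$-mod lets me view an object $\M\in\DDm_{unr,\lambda}$ as a chiral $\ARENL$-module whose restriction to $\ZL$ is set-theoretically, but not scheme-theoretically, supported on $\text{Spec}(\zl)$, with support confined to $\text{Spec}(\ZU)$. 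For $\lambda\neq 0$ such an $\M$ is genuinely \emph{non-central} over the unmodified center $\ZZ$, which is precisely why the modification of Proposition \ref{p3} is forced on us: passing to $\ACL$, $\ZL$ is exactly the modification that turns the maximal $I_\lambda$-torsion $\il(\M)$ into a central $\ZL$-module, so that $\il(\M)$ acquires an action of $\ARENL$ factoring through $\AFTL=\ARENL/\ZL$ and thus lands in $\AFTL$-mod.

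The content of the proposition is then that $\il$ loses no information, and I would reduce this to a non-degeneracy of the transverse Poisson geometry. By Lemma \ref{l2} the ideal $I_\lambda$ is co-isotropic, so $I_\lambda/I_\lambda^2$ is a Lie$^*$-algebroid and we have the anchor $\omega\colon\Omega(\ZL)\to T(\ZL)$; taking fibers at $x$ and using the identification (\ref{eq8}) this becomes $\omega\colon\Omega(\zl)\to N_{\zl/\ZL}$. The crucial input is Proposition \ref{p8}: although $\omega$ is only injective into the full normal bundle $N_{\OPDL/\OPPD}$, its image is exactly $N_{\OPDL/\OPDU}$, so that on the unramified locus it is an \emph{isomorphism} $\Omega(\OPDL)\simeq N_{\OPDL/\OPDU}$. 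This is what restricts us to $\DDm_{unr,\lambda}$ rather than the whole of $\DD$-mod: by Proposition \ref{p4} a $G[[t]]$-integrable module has support inside $\text{Spec}(\ZU)$, which is precisely the region where the conormal directions to $\text{Spec}(\zl)$ are hit bijectively by the algebroid action, and it is this bijection that makes the normal layers of $\M$ reconstructible from $\il(\M)$.

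With this identification, I would invoke Proposition \ref{kash 1}. For $\M\in\DDm_{unr,\lambda}$ the set-theoretic support hypothesis provides the exhaustive filtration by submodules annihilated by successive powers of $I_\lambda$, with bottom step $\il(\M)$; the isomorphism $\omega$ furnishes, layer by layer, an identification of the associated graded with the $\ARENL$-induced module built from $\il(\M)$, so that the adjunction map from the chiral induction of $\il(\M)$ to $\M$ is an isomorphism. Running the same argument in the other direction shows that $\il$ and this induction functor are mutually quasi-inverse, which is the asserted equivalence; the $G[[t]]$-equivariant version follows since all functors involved are manifestly compatible with the equivariant structure.

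The step I expect to be the main obstacle is the second one: translating the purely Poisson-geometric non-degeneracy of Proposition \ref{p8} into a statement about the \emph{chiral} action on modules, i.e. verifying that the fiber of the anchor $\omega$ on $\Omega(\ZL)$ really is the operator through which $\ARENL$ moves between the $I_\lambda$-isotypic layers of $\M$, so that Proposition \ref{kash 1} applies verbatim. Once that compatibility is pinned down, the remainder is bookkeeping of the centrality conditions (1)--(3) defining $\AFTL$-mod together with the modification identification of the first paragraph.
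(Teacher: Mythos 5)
Your proposal follows essentially the same route as the paper: it realizes $(\il)_{unr}$ as the instance of Proposition \ref{kash 1} with $\mL^{\flat}=\ARENL$, $\sY=\OPDL\subset\sY'=\OPDU\subset\sX=\OPPD$, with Proposition \ref{p8} supplying the identification $\Omega(\OPDL)\simeq N_{\OPDL/\OPDU}$ needed for the hypothesis on the anchor map and Proposition \ref{p4} confining the support to the unramified locus. The ``main obstacle'' you flag is exactly what the paper's Lemma \ref{filt} (matching the PBW filtration with the $I_\lambda$-adic kernel filtration via ellipticity) resolves, so your outline is correct and matches the paper's argument.
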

\begin{proof}[proof of Theorem \ref{th2}.]
Recall that we have a homeomorphism of chiral algebras
\[
 U(\AREN)\simeq \DD\rightarrow\DG,
\]
\noindent hence the forgetful functor $\DGM\rightarrow \gm$ factors as
\[
 \DGM\rightarrow \DDm\rightarrow \gm.
\]
We have a commutative diagram of functors
\[
 \xymatrix{\DDm^{G[[t]]}\ar[d]^{For}\ar[r]^{\il}&\AFTL\text{-mod}^{G[[t]]}\ar[d]^{For}.\\
\gm^{G[[t]]}\ar[r]^{\il}&\gm_{reg,\lambda}^{G[[t]]}}
\]
Thus to prove Theorem \ref{th2} it is sufficient to show that $\il:\DDm^{G[[t]]}\ra\AFTL\text{-mod}^{G[[t]]}$ is exact.
By the discussion in  \S\ref{red},
the functor $\il$ factors as 
\[
 \DDm^{G[[t]]}\xrightarrow{\iltt}\DDm_{unr,\lambda}^{G[[t]]}\xrightarrow{(\il)_{unr}}\AFT\text{-mod}
.\]
The first arrow is exact by Proposition \ref{p1}, and the second arrow is exact by Proposition \ref{p2}.
This finished the proof.
\end{proof}
\section{Proof of Proposition \ref{p2}}\label{5}
In this section we will prove Proposition \ref{p2}. 
The proof is based on Proposition \ref{kash 1}, a version of Kashiwara lemma for 
chiral modules for a chiral Lie algebroid.
\subsection{}
We will first prove the following proposition, that can be regarded as a different version of Kashiwara's Theorem, where instead of the action of differential operators, we have an action of an algebroid.
\begin{prop}\label{kash 2}
Let $Z$ and  $Y$ be smooth closed subschemes of  a smooth scheme  $X$, such that $Z\subset Y\subset X$. Let $L$ be a Lie algebroid on $X$ preserving $Y$, such that $L\twoheadrightarrow N_{Z/Y}$. Denote by $L\text{-mod}_{Z}^{Y} $ the category of $L$-modules set-theoretically supported on $Z$ and scheme theoretically supported on $Y$. Then this category is generated modules scheme theoretically supported on $Z$. 
\end{prop}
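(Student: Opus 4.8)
The plan is to prove a module-generation statement: that every object of $L\text{-mod}_Z^Y$ is generated, as an $L$-module, by its maximal submodule annihilated by the ideal of $Z$ (this is the sense in which the category is ``generated by modules scheme-theoretically supported on $Z$''). The engine will be a Kashiwara-type Euler operator built from the surjection $L\twoheadrightarrow N_{Z/Y}$, designed to act invertibly on every graded piece of the $Z$-adic torsion filtration except the bottom one.

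\emph{Reductions and the filtration.} Since every object is scheme-theoretically supported on $Y$, its structure is that of an $\mathcal{O}_Y$-module $M$ carrying a compatible action of $L$ through the anchor $\sigma\colon L\to T_X$; the hypothesis that $L$ preserves $Y$ means $\sigma$ lands in vector fields tangent to $Y$, so $L$ genuinely acts on $\mathcal{O}_Y$ and on $M$. Let $J\subseteq\mathcal{O}_Y$ be the ideal of $Z$; set-theoretic support on $Z$ says $J$ acts locally nilpotently, so the torsion filtration
\[
F_kM=\{\,m\in M: J^{\,k+1}m=0\,\},\qquad k\ge 0,
\]
is exhaustive. Its bottom $F_0M=\{m:Jm=0\}$ is killed by $J$, hence scheme-theoretically supported on $Z$; a one-line Leibniz computation shows that the subalgebroid of $L$ tangent to $Z$ preserves $F_0M$, so $F_0M$ descends to a module for the induced algebroid $L_Z$ on $Z$. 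As the claim concerns $L$-submodules and generation, it may be checked Zariski- (or formally-)locally near $Z$, where smoothness of $Z\subset Y$ furnishes a regular sequence $x_1,\dots,x_c$ generating $J$ whose symbols frame the conormal bundle $J/J^2$.

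\emph{The Euler operator and its spectrum.} Using $L\twoheadrightarrow N_{Z/Y}=(J/J^2)^{*}$, I would choose lifts $\nu_1,\dots,\nu_c\in L$ with $\sigma(\nu_i)(x_j)\equiv\delta_{ij}\pmod J$, and form, in the enveloping algebroid $U(L)$ of differential operators generated by $\mathcal{O}_Y$ and $L$,
\[
\theta=\sum_{i=1}^{c}\nu_i\,x_i .
\]
A Leibniz induction gives $\nu(F_jM)\subseteq F_{j+1}M$ for every $\nu\in L$, while $x_iF_jM\subseteq F_{j-1}M$ is immediate, so $\theta$ preserves each $F_kM$ and induces $\overline{\theta}$ on $\operatorname{gr}_kM=F_kM/F_{k-1}M$. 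On the associated graded the $\overline{x}_i$ have degree $-1$, the $\overline{\nu}_i$ degree $+1$, and $[\nu_i,x_j]=\sigma(\nu_i)(x_j)$ yields the Weyl relation $[\overline{\nu}_i,\overline{x}_j]=\delta_{ij}$ (the $J$-valued corrections drop a degree and so vanish). The crux is that $\overline{\theta}$ acts on $\operatorname{gr}_kM$ as the scalar $-k$: this follows by induction on $k$ from the commutation $\overline{x}_j\overline{\theta}=(\overline{\theta}-1)\overline{x}_j$ together with the structural vanishing $\bigcap_i\ker(\overline{x}_i\colon\operatorname{gr}_kM\to\operatorname{gr}_{k-1}M)=0$ for $k\ge 1$ (equivalently: if $Jm\subseteq F_{k-2}M$ then $J^{k}m=0$, so $m\in F_{k-1}M$ — there are no $J$-primitive vectors above the bottom degree). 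Over $\mathbb{C}$ the scalar $-k$ is invertible for all $k\ge 1$.

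\emph{Generation, and the main obstacle.} Let $M'=U(L)\cdot F_0M$ be the $L$-submodule generated by the bottom piece. I would show $F_kM\subseteq M'$ by induction on $k$, the base case being the definition of $M'$. For the step, take $m\in F_kM$; then each $x_im\in F_{k-1}M\subseteq M'$, so $\theta m=\sum_i\nu_i(x_im)\in M'$ since $M'$ is $L$-stable, while $\theta m\equiv -k\,m\pmod{F_{k-1}M}$ by the spectral computation and $F_{k-1}M\subseteq M'$; invertibility of $-k$ then forces $m\in M'$. Exhaustiveness gives $M=M'=U(L)\cdot F_0M$, i.e. $M$ is generated as an $L$-module by the scheme-theoretically $Z$-supported submodule $F_0M$; equivalently the adjunction map $i_*(F_0M)\to M$ is surjective. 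Since $F_0M$, $M'$, and the filtration are canonical (independent of the local choices of $x_i,\nu_i$), the local conclusion globalizes. The main obstacle is exactly the spectral computation: one must choose the $\nu_i$ so that their symbols are honestly dual to the $x_i$, verify that the $J$-valued correction terms disappear on the associated graded, and invoke the absence of primitive vectors in positive degree to pin the eigenvalue to $-k$; granting this, invertibility in characteristic zero makes the generation induction immediate.
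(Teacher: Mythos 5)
Your proof is correct and follows essentially the same route as the paper's: the same Euler-type operator $\sum_i \nu_i x_i$ built from lifts of a basis of $N_{Z/Y}$ and dual generators of the ideal of $Z$, shown to act as a nonzero scalar on each higher graded piece of the torsion filtration, whence surjectivity of the generation map. The only cosmetic differences are that you work with the ideal of $Z$ inside $\mathcal{O}_Y$ rather than inside $\mathcal{O}_X$ (which lets you bypass the paper's step that the annihilator $N_{Z/Y}^{\perp}\subset I/I^2$ acts trivially), and your eigenvalue $-k$ versus the paper's $i-1$ --- a sign discrepancy that is immaterial since only nonvanishing is used.
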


The above proposition basically says that, if we denote by $I$ the sheaf of ideals defining $Z\subset X$, a section $m$ of $\M\in L\text{-mod}_{Z}^{Y} $ can always be written as $\sum_{i} \eta_{i}\cdot m_{i}'$, where $I\cdot m'_{i}=0$, and $\eta_i\in  L$. 
\begin{proof}
Let $I$ be the ideal sheaf of $Z$ in $X$. Choose a basis 
$\{\bar l_i\}_{i=1,...,k}$ of $N_{Z/Y}$ and let $\{l_i\}$ be a lifting to $L$ (It is possible since $L$ maps onto $N_{Z/Y}$).
Choose $\{f_i\}\in I$ such that their image $\{\bar f_i\}$ in $I/I^2$ form a dual of $\{\bar l_i\}$, i.e.,
under the natural paring $$<,>N_{Z/Y}\otimes I/I^2\hookrightarrow N_{Z/X}\otimes I/I^2\ra\mO_Z$$  
we have $<\bar l_i,\bar f_j>=\delta_{i,j}$. 
It implies $l_i(f_j)=\delta_{i,j}\text{\ mod}\ I$, where $l_i(f_j)$ is the natural action of $L$ on $I$.

Let $\mM\in\L\text{-mod}_{Z}^{Y} $. We define $\mM^i= ker(I^i)$. Since $\mM$ is set theoretically supported on
$Z$ we have $\bigcup\mM^i=\mM$. Thus it is enough to show that $\mM^i$ is generated by 
$\mM^1$.  By induction, we can assume $\mM^{i-1}$ is generated by $\mM^1$. 
Let us show that $\mM^i$ is generated by $\mM^1$.
Since $f_i$ maps $\mM^i$ to $\mM^{i-1}$ and $l_i$ maps  $\mM^{i-1}$ to $\mM^{i}$, we have 
a well defined map $\delta=\sum l_if_i:\mM^i/\mM^{i-1}\ra\mM^i/\mM^{i-1}$ 
and 
it is enough to show that $\delta$ is surjective. 

We show that $\delta$ is equal to $(i-1)\text{id}$ on $\mM^i/\mM^{i-1}$.
Let $x\in\mM^i$. We need to show that $$(\delta-(i-1)\!)\cdot x\in\mM^{i-1}$$ and it is equivalent to
show that for any $f\in I$ we have $$f\cdot(\delta-(i-1)\!)\cdot x\in\mM^{i-2}.$$
Since $\mM$ is supported on $Y$, it implies the annihilator  
$N_{Z/Y}^{\bot}\subset I/I^2$ of $N_{Z/Y}$ in $I/I^2$ acts trivially on 
$\mM^i/\mM^{i-1}$. Since $N_{Z/Y}^{\bot}$ and $\{\bar f_i\}$ spans $I/I^2$ it reduce to show
that  $$f_j\cdot(\delta-(i-1)\!)\cdot x\in\mM^{i-2}$$
for $j=1,...,k$. 

Write $f_j\cdot(\delta-(i-1)\!)\cdot x=f_j\cdot(\sum l_if_i-(i-1)\!)\cdot x=\Big\{\sum l_if_i(f_j\cdot x)
-(i-2)(f_j\cdot x)\Big\}+\Big\{l_j(f_j)(f_j\cdot x)-(f_j\cdot x)\Big\}
+\Big\{\sum_{i\neq j} l_i(f_j)(f_j\cdot x)\Big\}.$

In the above expression, the first term is in $\mM^{i-2}$ by induction. The second term is in $\mM^{i-2}$
because $l_j(f_j)-1\in I$ and $f_j\cdot x\in\mM^{i-1}$. The third term is in $\mM^{i-2}$ because
$l_i(f_j)\in I$ for $i\neq j$ and $f_j\cdot x\in\mM^{i-1}$. The proof is finished.
\end{proof}

\subsection{}
In this section
we generalize Proposition \ref{kash 2} to chiral Lie algebroids.
We begin with the review of the result of \cite[\S 3.9]{BD}.
Let $(R,B,\mL,\mL^{\flat})$ be a chiral Lie algebroids (cf. \cite[\S 3.9.6]{BD}).
We assume that  $R$, $B$ are $\mO_X$-flat and $\mL$ is $R$-flat.
Following \cite[\S 3.9.24]{BD} 
we denote by  $\mL^{\flat}\text{-mod}^{ch}$  the category of chiral 
$(B,\mL^{\flat})$-module and 
$\mL^{\flat}\text{-mod}$ the category of $(B,\mL^{\flat})$-module. We have 
a natural adjoint pair:
$$Ind:\mL^{\flat}\text{-mod}\rightleftarrows\mL^{\flat}\text{-mod}^{ch}:For$$
where $For$ is the forgetful functor and $Ind$ is its left adjoint. 

For a $\mM\in\mL\text{-mod}$ we define the PBW filtration on $Ind(\mM)$
as the image by the chiral action $j_*j^*U(B,\mL^{\flat})\boxtimes\mM$.
If $\mM$ is a central $R$-module, then
we have a natural isomorphism 
\begin{equation}\label{pbw}
\mM\otimes_{R}\text{Sym}_{R}\mL\is\text{gr}Ind(\mM).
\end{equation}

We denote by $\sX=\text{Spec}(\hat R_x)$ and $\sY=\text{Spec}(R_x)$. Let $\hat\sY$ be 
the formal neighborhood of $\sY$in $\sX$, and let $\sY'$ be an ind-subscheme of $\hat\sY$
containing $\sY$ stable under the action of $\mL$.
Let $\mL^{\flat}\text{-mod}^{ch}_{\sY'}$ be the category of chiral $\mL^{\flat}$-module
supported on $\sY'$. Let $\mL^{\flat}\text{-mod}_{cent}\subset\mL^{\flat}\text{-mod}$ be the subcategory
$(B,\mL^{\flat})$-modules which are central as $R$-module.

Assume now that $\mL$ is elliptic, i.e., the arch map $\omega:\mL\ra\Theta(R)$ is injective and 
the the co-kernel of  $\omega$ is projective $R$-module of finite rank.
We further assume 
the image of  $\omega_x:\mL_x\hookrightarrow\Theta(R)_x\is N_{\sY/\sX}$ is 
equal to $N_{\sY/\sY'}$.

\begin{lemma}\label{filt}
For any $\mM\in\mL^{\flat}\text{-mod}_{cent}$, we have 
\item
a) $Ind(\mM)\in\mL^{\flat}\text{-mod}^{ch}_{\sY'}$.
\\
b) $F^i(Ind(\mM)\!)=Ker(I^i:Ind(\mM)\ra Ind(\mM)\!)$, where $I$ is the ideal sheaf of $\sY$ in $\sX$.
\end{lemma}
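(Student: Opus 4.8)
The plan is to deduce both statements from the PBW isomorphism (\ref{pbw}), which identifies $\mathrm{gr}\,Ind(\mM)$ with $\mM\otimes_{R}\text{Sym}_{R}\mL$, together with the basic commutation relation between the ideal sheaf $I$ of $\sY$ in $\sX$ and the algebroid $\mL$. Since $\mM$ is central, it is scheme-theoretically supported on $\sY$, i.e. $I\cdot\mM=0$. The first observation is that, for a local section $f$ of $I$ and $\ell$ of $\mL$, the Lie$^*$/chiral action satisfies the anchor relation $f\cdot(\ell\cdot m)=\ell\cdot(f\cdot m)+\omega(\ell)(f)\cdot m$, where $\omega(\ell)(f)$ is again a section of $R$. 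Hence multiplication by $I$ carries the PBW filtration step $F^{i}$ into $F^{i-1}$; iterating this and using $I\cdot\mM=0$ shows that every element of PBW degree $i$ is annihilated by a power of $I$, so that $Ind(\mM)$ is set-theoretically supported on $\sY$.

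For part a) I would upgrade this set-theoretic support on $\sY$ to support on the ind-subscheme $\sY'$. The relevant input is the hypothesis $\mathrm{im}(\omega_x)=N_{\sY/\sY'}$: the anchor moves $\mM$ only in the normal directions to $\sY$ that are tangent to $\sY'$. Concretely, under $\mathrm{gr}\,Ind(\mM)\is\mM\otimes_{R}\text{Sym}_{R}\mL$ and the identification $\Theta(R)_x\is N_{\sY/\sX}$ coming from (\ref{eq8}), all the normal classes produced by the anchor lie in the subbundle $N_{\sY/\sY'}$. Therefore the $\hat R_x$-action on $Ind(\mM)$ factors through functions on $\sY'$, which is exactly the assertion $Ind(\mM)\in\mL^{\flat}\text{-mod}^{ch}_{\sY'}$. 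Had the anchor image been all of $N_{\sY/\sX}$, the support would spread throughout $\hat\sY$ rather than staying inside $\sY'$.

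For part b) the inclusion $F^{i}(Ind(\mM))\subseteq\mathrm{Ker}(I^{i})$ is immediate from the degree-lowering property established above. The reverse inclusion is the heart of the matter, and I would prove it as the chiral analogue of the Euler-operator argument in Proposition \ref{kash 2}. Using ellipticity, $N_{\sY/\sY'}\subset N_{\sY/\sX}$ is a direct summand, so I may choose a basis $\{\bar\ell_i\}$ of $N_{\sY/\sY'}$, lift it to sections $\ell_i$ of $\mL$, and pick $f_i\in I$ with $\omega(\ell_i)(f_j)=\delta_{ij}\ \mathrm{mod}\ I$. Forming $\delta=\sum_i \ell_i f_i$ and repeating the computation of Proposition \ref{kash 2} in the chiral setting, one checks that $\delta$ preserves $\mathrm{Ker}(I^{i})$ and acts on $\mathrm{Ker}(I^{i})/\mathrm{Ker}(I^{i-1})$ as a nonzero scalar determined by $i$. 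Invertibility of $\delta$ on these quotients, combined with an induction on $i$, then forces $\mathrm{Ker}(I^{i})\subseteq F^{i}$, and the two inclusions give the claimed equality.

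The main obstacle is precisely this reverse inclusion: one must transport the commutator identities of Proposition \ref{kash 2} into the chiral framework, where the products $\ell_i f_i$ are compositions of chiral and Lie$^*$ actions mediated by $j_*j^*$ and $\Delta_!$ rather than ordinary module operations, and then verify that the resulting degree scalar is genuinely nonzero. This nonvanishing is exactly what the hypothesis $\mathrm{im}(\omega_x)=N_{\sY/\sY'}$ supplies, through the nondegeneracy of the pairing between $N_{\sY/\sY'}$ and the corresponding part of $I/I^{2}$; without it the operator $\delta$ could be singular and the PBW and $I$-adic filtrations would fail to coincide.
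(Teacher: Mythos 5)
Your part a) and the inclusion $F^i\subseteq Ker(I^i)$ in part b) match the paper: the paper proves a) by induction on the PBW filtration, using that $F^iInd(\mM)$ is the image of $j_*j^*(\mL^{\flat})\boxtimes F^{i-1}Ind(\mM)$ together with the standing hypothesis that $\sY'$ is stable under $\mL$ (your reformulation via the anchor image is the same mechanism, though stated more loosely), and it gets $F^i\subseteq Ker(I^i)$ from centrality of $\mathrm{gr}\,Ind(\mM)$ exactly as you do. Where you diverge is the reverse inclusion $Ker(I^i)\subseteq F^i$. The paper does \emph{not} run an Euler-operator argument here; it considers the multiplication map $f^i:I/I^2\otimes (F^{i+1}/F^i)\to F^i/F^{i-1}$, identifies its dual via the PBW isomorphism (\ref{pbw}) with the map obtained by tensoring $\text{Sym}^i(\mL_x)\to\text{Sym}^{i-1}(\mL_x)\otimes N_{\sY/\sX}$ with $\mM$, and deduces injectivity from ellipticity of $\mL$; a minimal-index argument then shows no element of $Ker(I^i)$ can live in $F^{i+k}\setminus F^{i+k-1}$ with $k>0$. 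The Euler operator of Proposition \ref{kash 2} is reserved for the surjectivity half of Proposition \ref{kash 1}, not for this lemma.

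Your alternative route has a concrete gap at the base case. Following the computation of Proposition \ref{kash 2}, the operator $\delta=\sum\ell_if_i$ acts on $Ker(I^i)/Ker(I^{i-1})$ by the scalar $i-1$, which vanishes for $i=1$ (indeed $\delta x=\sum\ell_i(f_ix)=0$ for $x\in Ker(I)$, so $\delta$ carries no information there). Hence your induction establishes $Ker(I^i)\subseteq F^i$ for $i\geq 2$ only \emph{conditionally} on $Ker(I)\subseteq F^1$, and that base case is not trivial: it is precisely the statement that no nonzero class in $F^m/F^{m-1}\cong\mM\otimes_R\text{Sym}^{m-1}(\mL_x)$ with $m\geq 2$ is annihilated by all of $I/I^2$, which is the paper's injectivity claim and genuinely uses ellipticity (injectivity of $\omega$), not just surjectivity of $\omega_x$ onto $N_{\sY/\sY'}$. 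One can repair your argument by computing $\delta$ on the PBW graded pieces (where it acts by $m-1$ on $F^m/F^{m-1}$, using the perfect pairing between $\mL_x\cong N_{\sY/\sY'}$ and the chosen part of $I/I^2$) and deriving a contradiction for $x\in Ker(I)\cap(F^m\setminus F^{m-1})$, $m\geq 2$; but that computation is exactly the nondegeneracy-on-the-associated-graded input you have not supplied, and once you have it the paper's duality argument is the shorter path. A secondary caveat: your identity $\delta=(i-1)\mathrm{id}$ on $Ker(I^i)/Ker(I^{i-1})$ requires knowing that the annihilator of $N_{\sY/\sY'}$ in $I/I^2$ acts trivially on these quotients, i.e. it presupposes part a); you should make that dependence explicit.
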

\begin{proof}
Let us first prove $a)$. We prove it by induction on the PBW filtration on $Ind(\mM)$. 
For simplicity, we denote by $F^i:=F^iInd(\mM)$, the $i$-the filtration.
By definition, $F^1$ is the image of the canonical map $\mM\ra Ind(\mM)$,
therefore, is supported on $\sY\subset\sY'$. Now by induction, we assume that
$F^{i-1}Ind(\mM)$ is supported on $\sY'$. Since $F^{i-1}Ind(\mM)$ is stable 
under $B$ and $(j_*j^*(\mL^{\flat})\boxtimes F^{i-1}Ind(\mM)\!)$ maps surjectively onto
$F^iInd(\mM)$, taking into account that $\mL^{\flat}/B\is\mL$ and $\sY'$ is stable 
under the action of $\mL$, we obtain $F^iInd(\mM)$ is supported on $\sY'$.

Proof of $b)$. Since $gr Ind(\mM)$ is central $R$-module, it implies $F^i\subset Ker(I^i)$.
To see the other inclusion, considering 
$$f^i:I/I^2\otimes_{\sY} (F^{i+1}/F^i) \ra (F^i/F^{i-1}).$$
We claim that for any nonzero element $\bar x\in F^{i+1}/F^i$ the image of $\bar x$
under $f^i$ is nonzero. 
The claim will imply the inclusion. Indeed, for any $x\neq 0\in Ker(I^i)$, let 
$k$ be the smallest non-negative integer such that $x\in F^{i+k}$. If $k=0$, we are done. If not, 
we have  $\bar x\neq 0\in F^{i+k}/F^{i+k-1}$.
Considering the map 
$$f:(I/I^2)^{\otimes i}\otimes F^{i+k}/F^{i+k-1}\ra F^k/F^{k-1}.$$
Since $x\in Ker(I^i)$, the image of $\bar x$ under $f$ is zero. 
On the other hand, the claim implies the image is nonzero. Contradiction.

Proof of the claim. It is enough to show that the dual of $f^i$
$$(f^i)^\vee:F^{i+1}/F^i\ra F^i/F^{i-1}\otimes_{\sY}N_{\sY/\sX}$$
is injective. But by \ref{pbw}, above map
is obtained by tensoring with $\mM$ from
$$\text{Sym}_{\sY}^i(\mL_x)\ra\text{Sym}_{\sY}^{i-1}(\mL_x)\otimes\mL_x
\ra\text{Sym}_{\sY}^{i-1}(\mL_x)\otimes N_{\sY/\sX}$$
which is injective since $\mL$ is elliptic.
\end{proof}

By above Lemma, we obtain a functor 
 $i_!:\mL^{\flat}\text{-mod}_{cent}\ra\mL^{\flat}\text{-mod}^{ch}_{\sY'}$
by restriction of $Ind$ to $\mL^{\flat}\text{-mod}_{cent}$.
Let $i^!:\mL^{\flat}\text{-mod}^{ch}_{\sY'}\ra\mL^{\flat}\text{-mod}_{cent}$
be the functor of taking maximal submodule supported on $\sY$.
By construction and Lemma \ref{filt}, it is easy to see that $i^!$ is the right adjoint of $i_!$
and the adjunction map $\mM\ra i^!\circ i_!(\mM)$ is an isomorphism. 
\begin{prop}\label{kash 1}
The functor $i_!:\mL^{\flat}\text{-mod}_{cent}\ra\mL^{\flat}\text{-mod}^{ch}_{\sY'}$ is an equivalence of categories 
with inverse given by $i^!$.
\end{prop}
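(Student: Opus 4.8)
The plan is to build the proof on what has already been established in the discussion preceding the statement: $i^!$ is right adjoint to $i_!$, and the unit $\mM\ra i^!i_!(\mM)$ is an isomorphism for every $\mM\in\mL^{\flat}\text{-mod}_{cent}$. In particular $i_!$ is already fully faithful, so the entire content of the proposition is the assertion that the counit $\phi\colon i_!i^!(\NN)\ra\NN$ is an isomorphism for every $\NN\in\mL^{\flat}\text{-mod}^{ch}_{\sY'}$. I would therefore fix such an $\NN$, set $\mM=i^!\NN$ (a central $R$-module, equal to $\ker(I\colon\NN\ra\NN)$ where $I$ is the ideal of $\sY$ in $\sX$), and verify that $\phi$ is both injective and surjective; these two checks use quite different inputs.

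For injectivity I would argue categorically. Let $K=\ker\phi$; as a subobject of $i_!i^!(\NN)=Ind(\mM)$ it is again supported on $\sY'$ by Lemma \ref{filt}(a). The functor $i^!$, being $\NN\mapsto\{n:I\cdot n=0\}$, is left exact, so applying it to $0\ra K\ra i_!i^!(\NN)\xrightarrow{\phi}\NN$ yields an exact sequence $0\ra i^!K\ra i^!i_!i^!(\NN)\xrightarrow{i^!\phi}i^!\NN$. The triangle identity gives $i^!\phi\circ\eta_{i^!\NN}=\mathrm{id}$, and since the unit $\eta$ is an isomorphism, $i^!\phi$ is an isomorphism, hence injective; thus $i^!K=0$. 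Finally, any nonzero module supported on $\sY'$ has nonzero sections annihilated by $I$ (take a nonzero element killed by a minimal power of $I$), so $i^!$ detects the zero object; therefore $K=0$ and $\phi$ is injective.

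For surjectivity I would run the Kashiwara generation argument. Because $\NN$ is supported on $\sY'$ the ideal $I$ acts locally nilpotently, so $\NN=\bigcup_i F^i\NN$ with $F^i\NN=\ker(I^i\colon\NN\ra\NN)$ and $F^1\NN=i^!\NN$. The image of $\phi$ is exactly the $\mL^{\flat}$-submodule of $\NN$ generated by $i^!\NN$, so it suffices to prove $F^i\NN\subseteq\langle i^!\NN\rangle$ by induction on $i$. Choosing a basis $\{\bar l_j\}$ of $N_{\sY/\sY'}\is\mL_x$, lifts $\{l_j\}\subset\mL$, and dual elements $\{f_j\}\subset I$ (with $l_j(f_j)=1\bmod I$ and $l_j(f_k)\in I$ for $j\ne k$), exactly as in the proof of Proposition \ref{kash 2}, the operator $\delta=\sum_j l_jf_j$ acts on $F^i\NN/F^{i-1}\NN$ as multiplication by $(i-1)$. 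For $i\ge 2$ the scalar $(i-1)$ is invertible, so every $\bar x\in F^i\NN/F^{i-1}\NN$ equals $(i-1)^{-1}\sum_j l_j(f_j\bar x)$, with each $f_jx\in F^{i-1}\NN\subseteq\langle i^!\NN\rangle$ by induction; hence $x\in\langle i^!\NN\rangle$, which completes the induction and shows $\phi$ is surjective. Together with the previous paragraph, $\phi$ is an isomorphism for every $\NN$, so $i_!$ and $i^!$ are mutually inverse equivalences.

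The step I expect to be the main obstacle is verifying, in the chiral rather than the classical setting, that $\delta=\sum_j l_jf_j$ really acts as $(i-1)$ on the graded pieces: one must check that the commutation $l_j(f_j)=1\bmod I$ and the factorization of the chiral action through the anchor $\omega$ induce on the associated graded precisely the computation carried out in Proposition \ref{kash 2}. This is exactly where the ellipticity of $\mL$ enters—it guarantees the symbol identification $\mM\otimes_R\text{Sym}_R\mL\is\text{gr}\,Ind(\mM)$ of \eqref{pbw} and the injectivity established in Lemma \ref{filt}(b)—together with the hypothesis that $\omega$ identifies $\mL_x$ with $N_{\sY/\sY'}$, which is what makes the nondegenerate pairing $\langle\bar l_j,\bar f_k\rangle=\delta_{jk}$ available in the first place.
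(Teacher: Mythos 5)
Your overall strategy coincides with the paper's: both reduce the statement to (i) the unit being an isomorphism together with conservativity and left-exactness of $i^!$, and (ii) surjectivity of the counit via a Kashiwara-type generation argument. Your injectivity paragraph is simply an unwinding of the abstract lemma the paper states (a conservative right adjoint with unit an isomorphism and counit surjective forces an equivalence), so that part is fine and is not genuinely different.

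The one real issue is the one you flagged yourself at the end. In the surjectivity step you take lifts $\{l_j\}\subset\mL$ and let $\delta=\sum_j l_jf_j$ act on $F^i\NN/F^{i-1}\NN$; but $\mL^{\flat}$ acts on $\NN$ only through a chiral action $j_*j^*(\mL^{\flat}\boxtimes\NN)\ra\Delta_!(\NN)$, so a section $l_j$ of $\mL$ is not an endomorphism of $\NN$, and the element-level computation of Proposition \ref{kash 2} cannot be transplanted verbatim to this setting. The paper closes exactly this gap by not re-running the computation at all: it applies Proposition \ref{kash 2}, as already proved, to the honest classical Lie algebroid $L=H^0_{dr}(D_x^*,\mL^{\flat})$, which does act on $\NN$ by genuine operators; the hypothesis of that proposition is supplied by the surjection $L\ra H^0_{dr}(D_x^*,\mL)\ra\mL_x\is N_{\sY/\sY'}$, and since the image of the counit $i_!\circ i^!(\NN)\ra\NN$ contains the image of $L\boxtimes i^!(\NN)$, surjectivity follows. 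So your argument is correct in outline and essentially the paper's; the missing ingredient is precisely the passage from chiral sections to $H^0_{dr}(D_x^*,-)$, after which your $\delta$-computation is literally the one already carried out in Proposition \ref{kash 2} and need not be redone.
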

\subsection{Proof of Proposition \ref{kash 1}}
We start with the following easy Lemma:
\begin{lemma}
Let $\mathrm C_1$ $\mathrm C_2$ be two abelian categoies and 
let $$F:\mathrm C_1\rightleftarrows\mathrm C_2:G$$
be a pair of adjoint functors. We assume that $G$ is conservative.
If for any $\mM\in\mathrm C_1$,  $\mM\ra G\circ F(\mM)$ is an isomorphism 
and for any $\mN\in\mathrm C_2$,
 $F\circ G(\mN)\ra\mN$ is surjective, then $F$ is an equivalence of 
 categories with inverse $G$.
\end{lemma}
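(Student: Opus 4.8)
The plan is to promote the given adjunction to an adjoint equivalence by showing that its counit $\varepsilon\colon F\circ G\Rightarrow \mathrm{id}_{\mathrm C_2}$ is a natural isomorphism. Writing $\eta\colon \mathrm{id}_{\mathrm C_1}\Rightarrow G\circ F$ for the unit, the hypotheses say precisely that $\eta$ is a natural isomorphism, that each component $\varepsilon_{\mN}$ is an epimorphism, and that $G$ is conservative. Once $\varepsilon$ is also shown to be an isomorphism, the pair $(F,G,\eta,\varepsilon)$ is an adjoint equivalence and the claim follows formally: $F$ is an equivalence with inverse $G$.

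First I would fix $\mN\in\mathrm C_2$ and apply $G$ to the counit $\varepsilon_{\mN}\colon F\circ G(\mN)\to \mN$. The second triangle identity of the adjunction reads $G(\varepsilon_{\mN})\circ\eta_{G(\mN)}=\mathrm{id}_{G(\mN)}$. Since $\eta$ is an isomorphism at \emph{every} object of $\mathrm C_1$, it is in particular an isomorphism at the object $G(\mN)$, so $\eta_{G(\mN)}$ is invertible; consequently $G(\varepsilon_{\mN})=\eta_{G(\mN)}^{-1}$ is an isomorphism in $\mathrm C_1$.

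Next I would invoke conservativity: because $G$ reflects isomorphisms and $G(\varepsilon_{\mN})$ is an isomorphism, $\varepsilon_{\mN}$ itself is an isomorphism in $\mathrm C_2$. (The surjectivity hypothesis is consistent with this, recording that $\varepsilon_{\mN}$ is at least an epimorphism, while the triangle identity supplies the complementary injectivity; conservativity delivers both at once.) As $\mN$ was arbitrary and the $\varepsilon_{\mN}$ are the components of a natural transformation, $\varepsilon$ is a natural isomorphism, whence $(F,G)$ is an adjoint equivalence.

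The argument is essentially formal, so there is no computational obstacle; the one point demanding care, and the step I regard as the crux, is using conservativity correctly, namely recognizing that it is exactly this hypothesis, rather than exactness or the surjectivity of $\varepsilon_{\mN}$, that converts the triangle-identity isomorphism $G(\varepsilon_{\mN})$ into a genuine isomorphism $\varepsilon_{\mN}$ in $\mathrm C_2$. If one preferred instead to exploit the surjectivity hypothesis directly, one could note that $G$, as a right adjoint, is left exact, apply it to the short exact sequence $0\to\ker\varepsilon_{\mN}\to F G(\mN)\xrightarrow{\varepsilon_{\mN}}\mN\to 0$ to conclude $G(\ker\varepsilon_{\mN})=0$, and then use conservativity to force $\ker\varepsilon_{\mN}=0$; but the triangle-identity route above is shorter and requires no exactness input.
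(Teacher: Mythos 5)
Your proof is correct, and in fact the paper offers no proof of this lemma at all --- it is stated as an ``easy Lemma'' and immediately applied --- so there is nothing to compare against line by line; your write-up supplies the missing argument. The route you take (triangle identity $G(\varepsilon_{\mN})\circ\eta_{G(\mN)}=\mathrm{id}_{G(\mN)}$, invertibility of $\eta_{G(\mN)}$, then conservativity of $G$) is airtight, and it has the interesting consequence that the surjectivity hypothesis on the counit, and indeed the abelian structure on $\mathrm C_1$ and $\mathrm C_2$, are redundant: a fully faithful left adjoint whose right adjoint is conservative is automatically an equivalence. The variant you sketch at the end --- applying the left exact right adjoint $G$ to $0\to\ker\varepsilon_{\mN}\to FG(\mN)\to\mN\to 0$, concluding $G(\ker\varepsilon_{\mN})=0$ and then $\ker\varepsilon_{\mN}=0$ by conservativity --- is evidently the argument the authors intended, since it is the only reading under which the surjectivity hypothesis does any work; note that in the paper's application this hypothesis is precisely what the subsequent lemma (via the Kashiwara-type Proposition \ref{kash 2}) is devoted to establishing, so your observation shows that step could in principle be bypassed once the unit is known to be an isomorphism and $i^!$ is known to be conservative. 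Either way, the lemma stands as stated and your proof is complete.
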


We apply above Lemma to our setting, i.e. $\mathrm C_1=\mL^{\flat}\text{-mod}_{cent}$,
 $\mathrm C_2=\mL^{\flat}\text{-mod}^{ch}_{\sY'}$, $F=i_!$ and $G=i^!$. 
We already showed that $\mM\ra i_!\circ i^!(\mM)$ is an isomorphism.
Since $\sY'\subset\hat\sY$, it implies $i^!$ is conservative.  Therefore, it remains to
prove the following:
\begin{lemma}
For any $\mN\in\mL^{\flat}\text{-mod}_{\sY'}^{ch}$, the adjunction map 
$i_!\circ i^!(\mN)\ra\mN$ is surjective.
\end{lemma}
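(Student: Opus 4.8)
The plan is to reproduce, in the chiral framework, the argument of Proposition \ref{kash 2}, with $\sY$, $\sY'$, $\sX$ playing the roles of $Z$, $Y$, $X$ and $\mL$ playing the role of the Lie algebroid $L$. First I would identify both sides of the adjunction map concretely. Since $i^!$ takes the maximal submodule scheme-theoretically supported on $\sY$, we have $i^!(\mN)=\ker(I\colon\mN\ra\mN)=:\mN^1$, which is central as an $R$-module and hence lies in $\mL^{\flat}\text{-mod}_{cent}$. Because $i_!=Ind$ and $Ind(\mN^1)$ is generated by the image of $\mN^1$, the image of the counit $i_!\,i^!(\mN)\ra\mN$ is exactly the chiral $\mL^{\flat}$-submodule $\mN'\subseteq\mN$ generated by $\mN^1$. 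Thus the Lemma amounts to showing $\mN'=\mN$.

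Next I would introduce the exhausting filtration $\mN^i:=\ker(I^i\colon\mN\ra\mN)$. Since $\mN$ is supported on $\sY'\subset\hat\sY$ and $\hat\sY$ is the formal neighborhood of $\sY$, every section is annihilated by some power of $I$, so $\bigcup_i\mN^i=\mN$. It therefore suffices to prove $\mN^i\subseteq\mN'$ by induction on $i$, the case $i=1$ being the definition of $\mN'$. For the inductive step, assume $\mN^{i-1}\subseteq\mN'$. Choose a basis $\{\bar l_j\}$ of $N_{\sY/\sY'}=\mathrm{im}(\omega_x)$, lift it to $\{l_j\}\subset\mL$, and pick $\{f_j\}\subset I$ whose classes in $I/I^2$ are dual to $\{\bar l_j\}$, exactly as in Proposition \ref{kash 2}. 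Form $\delta:=\sum_j l_j f_j$, an endomorphism of $\mN$ preserving the filtration. The key computation — the chiral analog of the one in Proposition \ref{kash 2} — is that $\delta$ acts as multiplication by $(i-1)$ on $\mN^i/\mN^{i-1}$.

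Granting this, for $i\geq 2$ and any $x\in\mN^i$ we have $x\equiv\tfrac{1}{i-1}\,\delta x\pmod{\mN^{i-1}}$; since $f_j x\in\mN^{i-1}\subseteq\mN'$ by induction and $\mN'$ is stable under the $\mL^{\flat}$-action, $\delta x=\sum_j l_j(f_j x)\in\mN'$, whence $x\in\mN'+\mN^{i-1}=\mN'$. This gives $\mN^i\subseteq\mN'$ and completes the induction, so $\mN=\bigcup_i\mN^i=\mN'$, which is the desired surjectivity.

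The main obstacle is the key computation that $\delta=(i-1)\,\mathrm{id}$ on $\mN^i/\mN^{i-1}$ in the chiral setting, where the actions of $l_j$ and of $f_j$ are given by chiral operations rather than by ordinary operators. I would verify it by passing to the fiber at $x$ and using the description of the associated graded from (\ref{pbw}) together with the ellipticity of $\mL$, precisely the ingredients used at the end of the proof of Lemma \ref{filt} to establish injectivity of $(f^i)^\vee$. The relation $l_j(f_k)\equiv\delta_{jk}\pmod I$ coming from the duality of $\{\bar l_j\}$ and $\{\bar f_k\}$ then reduces the computation to the purely commutative one already carried out in Proposition \ref{kash 2}.
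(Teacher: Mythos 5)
Your proposal is correct and follows essentially the same route as the paper: the paper's proof is exactly your inductive argument, except that instead of redoing the $\delta=\sum_j l_jf_j$ computation ``chirally'' it invokes Proposition~\ref{kash 2} as a black box for the honest Lie algebroid $L=H^0_{dr}(D_x^\times,\mL^{\flat})$, which is the clean way to turn sections of $\mL^{\flat}$ into ordinary operators on a module supported at $x$, the needed hypothesis being the surjectivity $L\to\mL_x\simeq N_{\sY/\sY'}$. The only wobble is your appeal to (\ref{pbw}) for the key identity $\delta=(i-1)\mathrm{id}$ --- that formula describes $\mathrm{gr}$ of $Ind$ of a \emph{central} module and is not the relevant tool for an arbitrary $\mN$; once you pass to $H^0_{dr}(D_x^\times,\mL^{\flat})$ the identity is literally the commutative computation already carried out in Proposition~\ref{kash 2} and needs no further input.
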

\begin{proof}
We applied Proposition \ref{kash 2} to $\sY\subset\sY'\subset\sX$ and 
the algebroid $L=H^0_{dr}(D_x^*,\mL^{\flat})$. By assumption, the morphism 
$L\ra H^0_{dr}(D_x^*,\mL)\ra \mL_x\is N_{\sY/\sY'}$
is surjective, thus Proposition implies $L\boxtimes i^!(\mN)$ maps surjectively onto $\mN$. 
Since the image of $i_!\circ i^!(\mN)\ra\mN$ contains the image of $L\boxtimes i^!(\mN)$, it 
implies $i_!\circ i^!(\mN)\ra\mN$ is surjective.
 \end{proof}
\subsection{Proof of Proposition \ref{p2}}
Let us apply above discussion to the case $\mL^{\flat}=\ARENL$, $\sX=\OPPD$, $\sY=\OPDL$ and 
$\sY'=\OPDU$.  By definition, we have an equivalence of categories 
$$\ARENL\text{-mod}_{cent}\is\AFTL\text{-mod}.$$
On the other hand, since $\AREN$ and $\ARENL$ are isomorphic on $X-x$, there is an 
equivalence of categories $\ARENL\text{-mod}^{ch}\is\AREN\text{-mod}^{ch}$.
Moreover, above equivalence 
induces an equivalence of sub-categories 
$$\ARENL\text{-mod}^{ch}_{\sY'}\is\AREN\text{-mod}^{ch}_{unr,\lambda}$$  
and we have the following commutative diagram:
\[\xymatrix{\ARENL\text{-mod}^{ch}_{\sY'}\ar[r]\ar[d]^{i^!}&\AREN\text{-mod}^{ch}_{unr,\lambda}\ar[d]^{(i_{\lambda}^!)_{unr}}
\\\ARENL\text{-mod}_{cent}\ar[r]&\AFTL\text{-mod}}\]
Thus, Proposition \ref{kash 1} implies the following:
\begin{thm}[Proposition \ref{p2}]
The functor \[(i_{\lambda}^!)_{unr}:\DDm_{unr,\lambda}\is\AREN\text{-mod}^{ch}_{unr,\lambda}\ra\AFTL\text{-mod}\]
is an equivalence of categories. 
\end{thm}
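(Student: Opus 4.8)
The plan is to exhibit the concrete situation as an instance of the abstract chiral Kashiwara equivalence of Proposition \ref{kash 1}, so that the proof reduces to matching data and to verifying the ellipticity hypothesis geometrically. First I would fix the dictionary with a chiral Lie algebroid $(R,B,\mL,\mL^{\flat})$: take $R=\ZL$, $B=\ACL\underset{\ZL}{\otimes}\ACL$, $\mL=\OL$ with anchor $\omega:\OL\ra T(\ZL)$, and $\mL^{\flat}=\ARENL$. The defining sequence $0\ra\ACL\underset{\ZL}{\otimes}\ACL\ra\ARENL\ra\OL\ra 0$ of Section \ref{algebroids} then realizes the identity $\mL^{\flat}/B\is\mL$, and the flatness of $R$, $B$ and $\mL$ holds by construction. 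Via Theorem \ref{th10} the attached spaces become $\sX=\text{Spec}(\ZC)\is\OPPD$ and $\sY=\text{Spec}(\zl)\is\OPDL$, and I would take the intermediate ind-subscheme to be $\sY'=\OPDU$; Proposition \ref{p4} ensures that $\OPDU$ is exactly the locus carrying the relevant unramified support.

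Next I would record the two identifications of module categories that bracket the desired functor. On the target side, $\ARENL\text{-mod}_{cent}\is\AFTL\text{-mod}$ is essentially tautological: centrality of a $\ZL$-module is precisely the condition that the $\ARENL$-action factor through $\ARENL/\ZL=\AFTL$. On the source side, the isomorphism $\AREN|_{X-x}\is\ARENL|_{X-x}$ provided by Proposition \ref{p3} yields an equivalence of chiral module categories $\ARENL\text{-mod}^{ch}\is\AREN\text{-mod}^{ch}$, which restricts to an equivalence of the subcategories supported on $\sY'=\OPDU$, namely $\ARENL\text{-mod}^{ch}_{\sY'}\is\AREN\text{-mod}^{ch}_{unr,\lambda}\simeq\DDm_{unr,\lambda}$. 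Under these two identifications the abstract functors $i_!$ and $i^!$ of Proposition \ref{kash 1} become $(\il)_{unr}$ and its candidate inverse.

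The substantive point, and the main obstacle, is the verification of the ellipticity hypothesis and, above all, of the fiber condition on the anchor. I must check that $\omega:\OL\ra T(\ZL)$ is injective with projective cokernel of finite rank, and that its fiber at $x$ has image equal to the correct normal bundle. Using \ref{eq8} to identify $T(\ZL)_x\is N_{\OPDL/\OPPD}$ and Theorem \ref{th10} to identify $\Omega(\zl)\is\Omega(\OPDL)$, the condition reads $\text{im}\bigl(\omega_x:\Omega(\OPDL)\ra N_{\OPDL/\OPPD}\bigr)=N_{\OPDL/\OPDU}$. This is exactly what the Poisson geometry of opers supplies: Lemma \ref{l2} gives co-isotropy of $I_{\lambda}$, so that $\OL$ is a genuine Lie$^*$-algebroid and diagram \ref{eq5} identifies the anchor fiber with the natural map $\Omega(\OPDL)\ra N_{\OPDL/\OPPD}$, while Proposition \ref{p8} identifies this map as an injection with image $N_{\OPDL/\OPDU}$ (equivalently $\Omega(\OPDL)\is N_{\OPDL/\OPDU}$). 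Thus the whole hypothesis rests on the Hamiltonian-reduction description of $\OPPD$ and the resulting co-isotropy, which is what forces the anchor to land precisely in the tangent directions along the unramified locus.

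With ellipticity and the image condition established, Proposition \ref{kash 1} gives that $i_!:\mL^{\flat}\text{-mod}_{cent}\ra\mL^{\flat}\text{-mod}^{ch}_{\sY'}$ is an equivalence with inverse $i^!$. Transporting this equivalence through the two category identifications of the second paragraph produces exactly the asserted equivalence $(\il)_{unr}:\DDm_{unr,\lambda}\is\AREN\text{-mod}^{ch}_{unr,\lambda}\ra\AFTL\text{-mod}$, which completes the proof.
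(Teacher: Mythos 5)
Your proposal follows essentially the same route as the paper: identify $\mL^{\flat}=\ARENL$, $\sX=\OPPD$, $\sY=\OPDL$, $\sY'=\OPDU$, use the tautological equivalence $\ARENL\text{-mod}_{cent}\is\AFTL\text{-mod}$ and the equivalence $\ARENL\text{-mod}^{ch}_{\sY'}\is\AREN\text{-mod}^{ch}_{unr,\lambda}$ coming from the isomorphism away from $x$, and then invoke Proposition \ref{kash 1}. Your explicit verification of the ellipticity and anchor-image hypotheses via Lemma \ref{l2}, diagram \ref{eq5} and Proposition \ref{p8} is a point the paper leaves implicit (it is set up at the end of Section \ref{2}), but the argument is the same.
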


\section{Faithfulness}\label{6}
In this section we show that the functor $\Gamma^{\lambda}:\DM\ra\gc\text{-mod}$
is faithful.
The proof is similar to the case $\lambda=0$ using ideal of 
Harish-Chandra action of \cite[\S 7.14]{BD2}. Namely, let $K\subset G[[t]]$ be a
compact open subgroup. Let $\DM^K$ be the category of $K$-equvariant $D_{crit}$-module
on $\GR$ and let $\gc\text{-mod}^K$ be the category of $K$-integrable $\gc$-modules.
It is clear that $\Gamma^{\lambda}$ maps $\DM^K$ to $\gc\text{-mod}^K$.

Let $D^b(D_{crit}(\GRK)\!)$ be the bounded derived category of $D_{crit}$-modules
on $\GRK$. We have the convolution functor
\[\star:D^b(D_{crit}(\GRK)\!)\times\DM^K\ra D^b(D_{crit}(\GR)\!).\]
Moreover, we have the action functor
\[\star:D^b(D_{crit}(\GRK)\!)\times\gc\text{-mod}^K\ra D^b(\gc\text{-mod}).\]

\begin{lemma}\label{hc}
The functor $\Gamma^\lambda=R\Gamma^{\lambda}:D^b(\DM^K)\ra D^b(\gc\text{-mod}^K)$
intertwines the $D^b(D_{crit}(\GRK)\!)$-action.
\end{lemma}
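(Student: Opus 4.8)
The plan is to establish that $\Gamma^{\lambda}$ is a morphism of module categories over $D^b(D_{crit}(\GRK))$ and to deduce it from the case $\lambda = 0$ of \cite{FG}. I would begin by unwinding the two actions through a single correspondence. Writing the convolution diagram
$$\GRK \xleftarrow{\,q\,} G(\!(t)\!)\times^{K}\GR \xrightarrow{\,p\,} \GR,$$
the geometric action is $\mathcal{S}\star\mathcal{F} = p_{*}(\mathcal{S}\,\widetilde{\boxtimes}\,\mathcal{F})$, where $\mathcal{S}\,\widetilde{\boxtimes}\,\mathcal{F}$ denotes the descent of $\mathcal{S}\boxtimes\mathcal{F}$ along the $K$-equivariant structure of $\mathcal{F}$, while the action on $\gc\text{-mod}^{K}$ is manufactured from the same correspondence by means of the Harish-Chandra formalism of \cite[\S 7.14]{BD2}. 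Under the equivalence $\DM\simeq\DGM$ of Theorem \ref{t1}, a $K$-equivariant $D$-module $\mathcal{F}$ becomes a $\DG$-module $M_{\mathcal{F}}$ carrying two commuting $\gc$-actions through the embeddings $l$ and $r$, and $\Gamma^{\lambda}(\mathcal{F}) = \Hom(\VAC, M_{\mathcal{F}})$ extracts the $\gc$-module by imposing a condition along one of them, say $r$, leaving the opposite action $l$ as the output structure.

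The heart of the argument is that the convolution is integrated by the geometric left $G(\!(t)\!)$-action on $\GR$, which corresponds under $\DM\simeq\DGM$ to the surviving factor $l$, whereas the functor $\Hom(\VAC,-)$ is taken with respect to the commuting factor $r$. Consequently the operation $\mathcal{S}\star(-)$ alters only the $l$-structure of $M_{\mathcal{F}}$ and therefore commutes with $\Hom(\VAC,-)$; the passage from $\VACO$ to $\VAC$ modifies solely the condition imposed along $r$ and leaves the entire $\lambda=0$ mechanism of \cite{FG} on the $l$-factor intact. This produces the desired natural comparison map
$$\Gamma^{\lambda}(\mathcal{S}\star\mathcal{F}) = \Hom(\VAC, M_{\mathcal{S}\star\mathcal{F}}) \longrightarrow \mathcal{S}\star\Hom(\VAC, M_{\mathcal{F}}) = \mathcal{S}\star\Gamma^{\lambda}(\mathcal{F}),$$
together with its compatibility with the associativity constraints. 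To promote this to the derived statement I would invoke Theorem \ref{th1}: since the underived $\Gamma^{\lambda}$ is exact, $R\Gamma^{\lambda}$ is computed term by term on complexes, so the comparison isomorphism on objects propagates to all of $D^b(\DM^{K})$.

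The hard part will be the geometric identification underlying the second paragraph: verifying that convolution along $p$ really corresponds to an operation that touches only the $l$-factor of $M_{\mathcal{F}}$, and that the twisted external product $\mathcal{S}\,\widetilde{\boxtimes}\,\mathcal{F}$ is compatible with the chiral-algebra description $\DG\simeq U(\AREN)$. Concretely this requires tracking the critical twist and the $G[[t]]$-equivariance through the maps $p$ and $q$ and checking that descent to $G(\!(t)\!)\times^{K}\GR$ respects the bimodule structure --- precisely the point at which the Harish-Chandra compatibility of \cite{BD2} is doing the real work. Once this input is secured, the intertwining is formal and follows, mutatis mutandis, the treatment of the case $\lambda=0$ in \cite{FG}.
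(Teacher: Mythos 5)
Your proposal is correct and follows essentially the same route as the paper: the paper also reduces everything to the fact that $M_{\mF}$ carries two commuting $\gc$-structures, with $\Hom(\VAC,-)$ taken along the one not touched by convolution. The only difference is that the paper short-circuits the ``hard part'' you flag by citing the formula $M_{\mF}=\mF\star\mathcal{D}_{crit,x}$ from \cite[Section 1.12]{FG3}, from which associativity of $\star$ immediately gives $\Gamma^{\lambda}(\mF_1\star\mF_2)=\mF_1\star\Gamma^{\lambda}(\mF_2)$.
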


\begin{proof}
\quash{
Using the map $\pi:G(\!(t)\!)\ra\GR$, 
giving any $\mM\in\DM$ we can produce a chiral $\DG$-module supported at $x$
by considering \[\Gamma(G(\!(t)\!),\pi^*(\mM)\!).\]}
Let $\mF\in\DM$ and let $M_{\mF}\in\DGM$ be the corresponding 
chiral $\DG$-module under Theorem \ref{t1}.
By the result in \cite[Section 1.12]{FG3},  we have 
\[M_{\mF}=\mF\star\mathcal{D}_{crit,x}\] where $\mathcal{D}_{crit,x}$ is the fiber of $\DG$
at $x$ which is a $(\gc,G[[t]])$-bimodule.
Thus for any $\mF_1\in D^b(D_{crit}(\GRK)\!)$ and $\mF_2\in\DM^K$, we have 
$\Gamma^{\lambda}(\GR,\mF_1\star\mF_2)=\text{Hom}_{\gc}(\mathbb V^{\lambda},M_{(\mF_1\star \mF_2)})=
\text{Hom}_{\gc}(\mathbb V^{\lambda},\mF_1\star \mF_2\star\mathcal{D}_{crit,x})=
\mF_1\star \text{Hom}_{\gc}(\mathbb V^{\lambda},\mF_2\star\mathcal{D}_{crit,x})
=\mF_1\star \Gamma^{\lambda}(\GR,\mF_2).$

\end{proof}

\begin{prop}
The functor $\Gamma^{\lambda}$ is faithfull
, i.e., for any $\mM\neq 0\in\DM$ we have $\Gamma^{\lambda}(\GR,\mM)\neq 0$.
\end{prop}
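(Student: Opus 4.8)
The plan is to argue by contradiction, exploiting the Harish--Chandra (convolution) compatibility of Lemma \ref{hc} together with the exactness of $\Gamma^{\lambda}$ already established in Section \ref{4}. Since $\Gamma^{\lambda}$ is exact, it is faithful exactly when it annihilates no nonzero object, so it suffices to prove the stated object condition: $\Gamma^{\lambda}(\GR,\mM)=0$ forces $\mM=0$. First I would reduce to the equivariant setting, since any nonzero $\mM\in\DM$ is $K$-equivariant for some compact open subgroup $K\subset G[[t]]$, i.e.\ $\mM\in\DM^{K}$. Suppose $\Gamma^{\lambda}(\GR,\mM)=0$. Because $\Gamma^{\lambda}$ is exact it agrees with its derived functor, so Lemma \ref{hc} gives, for every kernel $\mF_{1}\in D^{b}(D_{crit}(\GRK))$,
\[
\Gamma^{\lambda}(\GR,\mF_{1}\star\mM)\simeq \mF_{1}\star\Gamma^{\lambda}(\GR,\mM)=0.
\]
The whole argument then reduces to producing a single kernel $\mF_{1}$ for which the left-hand side is manifestly nonzero.

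The second step is the base-point computation, which furnishes the source of nonvanishing. Let $\mathrm{pt}=G[[t]]/G[[t]]\in\GR$ be the base point and $\delta_{\mathrm{pt}}$ the corresponding skyscraper $D$-module. Under Theorem \ref{t1} it corresponds to the fiber $\mathcal{D}_{crit,x}$, and by the decomposition $(\mathcal{D}_{crit})_{x}=\bigoplus_{\mu}(\mathcal{D}_{crit})_{x}^{\mu}$ recalled in Section \ref{3}, applying $\Hom_{\gc}(\VAC,-)$ along the left action isolates the summand $(\mathcal{D}_{crit})_{x}^{\lambda}$ attached to our dominant weight $\lambda$ and yields, as a module for the right action,
\[
\Gamma^{\lambda}(\GR,\delta_{\mathrm{pt}})=\Hom_{\gc}(\VAC,\mathcal{D}_{crit,x})\simeq \VACT\neq 0.
\]
The same computation shows, more generally, that $\Gamma^{\lambda}$ is nonzero on any nonzero $K$-equivariant object set-theoretically supported on the closed base-point orbit.

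The third and decisive step is to move a nonzero piece of $\mM$ to the base point by convolution. Since $\mM\neq0$ is $K$-equivariant, there is a $K$-orbit $\mathcal{O}_{v}=K\cdot v\,\mathrm{pt}\subset\GR$, open in the support of $\mM$, with $i^{*}_{\mathcal{O}_{v}}\mM\neq0$. Using the standard and costandard objects of the affine Hecke category on $\GRK$ attached to $v$, together with the relation $j_{v,!}\star j_{v^{-1},*}\simeq\delta_{\mathrm{pt}}$, I would take $\mF_{1}$ to be the costandard object $j_{v^{-1},*}$ (or the standard one, as dictated by the behaviour of $\mM$ along $\mathcal{O}_{v}$); this relation then forces $\mF_{1}\star\mM$ to contain, as a subquotient, the clean extension to $\mathrm{pt}$ of $i^{*}_{\mathcal{O}_{v}}\mM$, a nonzero object supported at the base point. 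By the second step this subquotient has nonzero $\Gamma^{\lambda}$, and since $\Gamma^{\lambda}$ is exact it sends the inclusion of the subquotient to an injection, whence $\Gamma^{\lambda}(\GR,\mF_{1}\star\mM)\neq0$, contradicting the displayed vanishing. I expect the main obstacle to be exactly this last step: one must set up the convolution with the Hecke kernel correctly, verify that the base-point contribution genuinely appears as a subquotient rather than being cancelled within the convolution, and confirm that exactness of $\Gamma^{\lambda}$ allows that contribution to survive into global sections. Once this geometric bookkeeping is in place, the contradiction is immediate and faithfulness follows.
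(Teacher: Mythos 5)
Your first two steps match the paper's: you invoke Lemma \ref{hc} together with the exactness of $\Gamma^{\lambda}$, and your base-point computation $\Gamma^{\lambda}(\delta_{e})=\VACT\neq 0$ agrees with the one in the paper. The divergence, and the genuine gap, is in your third step. The paper does not attempt to move a piece of $\mM$ to the base point with Hecke kernels. It first reduces to the case $\mM\in\DM^{G[[t]]}$ by citing \cite[Lemma 9.11]{FG}, which only asserts that some $G[[t]]$-equivariant kernel $\mM'$ on $\GRK$ satisfies $\mM'\star\mM\neq 0$ (nothing is claimed about support at the base point). It then uses geometric Satake: $(\DM^{G[[t]]},\star)$ is identified with the rigid tensor category $\Rep(\check{G})$, so for nonzero $\mM$ one may take $\mM'$ to be the dual object, and rigidity yields an injection $\delta_{e}\hookrightarrow\mM'\star\mM$. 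Exactness of $\Gamma^{\lambda}$ then gives $0\neq\VACT=\Gamma^{\lambda}(\delta_{e})\hookrightarrow\mM'\star\Gamma^{\lambda}(\mM)$, whence $\Gamma^{\lambda}(\mM)\neq 0$.

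Your substitute for these two inputs --- the claim that $j_{v^{-1},*}\star\mM$ contains, as a subquotient, the clean extension to the base point of $i^{*}_{\mathcal{O}_{v}}\mM$ --- is precisely the step you flag as the obstacle, and it is not justified. The fiber at the base point of a convolution is a global hypercohomology over the fiber of the action map (essentially all of $\GRK$), not the restriction of $\mM$ to a single orbit, so there is no a priori reason that this contribution is nonzero or that it sits as a subquotient of one cohomology sheaf rather than cancelling; moreover the identity $j_{v,!}\star j_{v^{-1},*}\simeq\delta_{\mathrm{pt}}$ is an Iwahori/affine-flags statement and does not transfer as written to the convolution $D^{b}(D_{crit}(\GRK))\times\DM^{K}\rightarrow D^{b}(D_{crit}(\GR))$ for a general compact open $K$, whose orbits are not indexed by group elements with inverses. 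The rigidity of the Satake category is exactly the device the paper uses to guarantee that the unit object genuinely embeds into a convolution, and your argument has no replacement for it. (Your observation that an exact functor cannot kill an object one of whose subquotients has nonzero image is fine; the problem is producing that subquotient.)
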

\begin{proof}
We follow \cite{FG} Section 9.10.
It is shown in \cite[Lemma 9.11]{FG}, for any $\mM\neq 0\in\DM^K$ there exists
$\mM'\in D_{crit}\text{-mod}(\GRK)^{G[[t]]}$ such that $\mM'\star\mM\neq 0\in D^b(D_{crit}(\GR)^{G[[t]]})$.
Therefore, by Lemma \ref{hc}, it is enough to show that $\Gamma^{\lambda}(\mM)\neq 0$
for $\mM\neq 0\in\DM^{G^{[[t]]}}$.  
Recall that the convolution functor $\star$ on $\DM^{G[[t]]}$ is exact and geometric Satake identify 
the monoidal category $(\DM^{G[[t]]},\star)$ with the category of representation of the dual group. 
In particular, it implies for any non-zero $\mM\in\DM^{G[[t]]}$ we can find  $\mM'\in\DM^{G[[t]]}$
such that  there is an injection $\delta_e\hookrightarrow\mM'\star\mM$, where $\delta_e$ is the delta $D$-module 
at the unit $e\in\GR$. Since $\Gamma^{\lambda}$ is exact, we obtain an injection
\[0\neq\mathbb V^{\tau(\lambda)}=\Gamma^{\lambda}(\delta_e)\hookrightarrow \Gamma^{\lambda}(\mM'\star\mM)=\mM'\star\Gamma^{\lambda}(\mM)\]
and it implies $\Gamma^{\lambda}(\mM)\neq 0$.
\end{proof}

\vspace{2.0cm}

\quash{
We apply the previous proposition to $Spec(\zl)\subset \text{Spec}(\ZU)\subset \text{Spec}(\ZC)$, and algebroid $\L=H^{0}_{dr}(D_x^*,\OO^c)$.  In fact recall that, from Proposition \ref{p8}, we have $\Omega(\OPDL)\simeq N_{\OPDL/\OPDU}$, hence, under the isomorphism $Fun(\OPPD)\simeq \ZC$ we get
\[
\Omega(\zl)\simeq N_{\zl/\ZU}.
\]
Moreover we have natural surjections
\[
H^0_{dr}(D_x^*,\OO^c)\rightarrow H^0_{dr}(D_x^*,\OO)\simeq H^0_{dr}(D_x^*,\Omega(\ZL)\!)\rightarrow \left( \Omega(\ZL) \right)_{x}\simeq \Omega(\zl).
\]

 \begin{lemma}
For $\NN\in \ZL\text{-mod}_{cent}$, the module $\ild(\NN)$ belongs to $\PDDm_{unr,\lambda}$. Moreover, the induced filtration $F^{i}(\ild(\NN)\!)$ on $\ild(\NN)$ coincides with the one given by  powers of the ideal $I_{\lambda},$ i.e.
\[
F^{i}(\ild(\NN)\!)=\left\{ n\in \ild(\NN)| I_{\lambda}^{i}n=0  \right\}.
\]
\end{lemma}
\begin{proof}
By \cite{FG3}, Spec($\ZU$) is the smallest sub-scheme containing the disjoint union over dominant $\lambda$'s of Spec($\zl$), which is stable under the action of $\OO$. Clearly, the $0$-th term $F^{0}(\ild(\NN)\!)=\M$ is supported on Spec$(\zl)$$\subset$ Spec$(\ZU)$. By induction, assume that $F^{i}(\ild(\NN)\!)$ is supported on Spec($\ZU$). Since $F^{i+1}(\ild(\NN)\!)$ is, by definition, the image of the chiral action $j_*j^*(\!(\Psi\boxtimes \Psi)(\DD)\boxtimes F^{i}(\ild(\NN)\!))\rightarrow \Delta_!(\NN)$, and the action of $\OO^c$ on $\ZZ$ factors through $\OO^c\rightarrow \ZZ$, by the stability of $\ZU$ we see that the support of $F^{i+1}(\ild(\NN)\!)$ is also contained in Spec($\ZU$).\\
For the second part, since the action of $\ZL$ on Gr($\ild(\NN)$) is central, we have $I_{\lambda}\cdot F^{i+1}(\ild(\NN)\!)\subset F^{i}(\ild(\NN)\!)$. Hence elements in $F^{i}(\ild(\NN)\!)$ are annihilated by $I_{\lambda}^{i}$. For the other inclusion, recall that, from Proposition \ref{p8} and equation \ref{eq8}, we have an inclusion
\[
N_{\zl/\ZU}\simeq \Omega(\zl)\simeq \Omega(\ZL)_{x}\hookrightarrow T(\ZL)_{x}\simeq N_{\zl/\ZC}.
\]
{\color{red}Finish the proof.}
\end{proof}}

\newpage

\end{document}